\newcommand{\R}{\mathbb{R}}
\newcommand{\C}{\mathbb{C}}
\newcommand{\N}{\mathbb{N}}
\newcommand{\Q}{\mathbb{Q}}
\newcommand{\Z}{\mathbb{Z}}
\newcommand{\bs}{\mathbb{S}}
\newcommand{\cA}{\mathcal{A}}
\newcommand{\cZ}{\mathcal{Z}}
\newcommand{\cF}{\mathcal{F}}
\newcommand{\loc}{\mathrm{loc}}
\newcommand{\vol}{\mathrm{vol}}
\newcommand{\pr}{\mathrm{pr}}
\newcommand{\norm}[1]{\left\lVert #1 \right\rVert}
\newcommand{\abs}[1]{\left\lvert #1 \right\rvert}
\newcommand{\ip}[2]{\left\langle #1, #2 \right\rangle}
\theoremstyle{plain}
\newtheorem{thm}{Theorem}[section]
\newtheorem{lem}[thm]{Lemma}
\newtheorem{prop}[thm]{Proposition}
\newtheorem{cor}[thm]{Corollary}
\theoremstyle{definition}
\newtheorem{rem}[thm]{Remark}
\newtheorem*{definition*}{Definition}
\numberwithin{equation}{section}
\begin{document}

\title{Signed quasiregular curves}
\author{Susanna Heikkilä}
\address[]{Department of Mathematics and Statistics, P.O. Box 68 (Pietari Kalmin katu 5), FI-00014 University of Helsinki, Finland}
\email{susanna.a.heikkila@helsinki.fi}
\thanks{This work was supported in part by the Academy of Finland project \#332671}
\subjclass[2010]{Primary 30C65; Secondary 32A30, 53C15, 53C57}

\begin{abstract}
We define a subclass of quasiregular curves, called signed quasiregular curves, which contains holomorphic curves and quasiregular mappings. As our main result, we prove a growth theorem of Bonk-Heinonen type for signed quasiregular curves. To obtain our main result, we prove that signed quasiregular curves satisfy a weak reverse Hölder inequality and that this weak reverse Hölder inequality implies the main result. We also obtain higher integrability for signed quasiregular curves. Further, we prove a cohomological value distribution result for signed quasiregular curves by using our main result and equidistribution.
\end{abstract}

\maketitle

\section{Introduction}

Our motivation for defining signed quasiregular curves comes from Liouville type growth results in conformal geometry. By the classical Liouville's theorem, every bounded entire function $\C \to \C$ is constant. The same result holds for quasiregular mappings $\R^n \to \R^n$. Recall that a continuous mapping $f\colon M \to N$ between oriented Riemannian $n$-manifolds, $n\geq 2$, is \emph{$K$-quasiregular for $K\geq 1$} if $f$ belongs to the Sobolev space $W^{1,n}_{\loc}(M,N)$ and satisfies the distortion inequality
\[
\norm{Df}^n \leq KJ_f
\]
almost everywhere in $M$, where $\norm{Df}$ is the operator norm of the differential $Df$ of $f$ and $J_f$ is the Jacobian determinant of $f$. We refer to \cite{BI} and \cite{RI} for the theory of quasiregular mappings.

The aforementioned Liouville theorem for quasiregular maps $\R^n \to \R^n$ follows from the following growth bound: \emph{Given $n\geq 2$ and $K\geq 1$ there exists a constant $\varepsilon = \varepsilon(n,K)>0$ so that every $K$-quasiregular mapping $f\colon \R^n \to \R^n$ satisfying
\[
\lim_{\abs{x} \to \infty} \abs{x}^{-\varepsilon}\abs{f(x)}=0
\]
is constant}; see \cite[Corollary III.1.13]{RI}.

For Riemannian manifolds this Euclidean result takes the following form (Bonk and Heinonen \cite[Theorem 1.11]{BH}): \emph{Given $n\geq 2$ and $K\geq 1$ there exists a constant $\varepsilon = \varepsilon(n,K)>0$ so that every nonconstant $K$-quasiregular mapping $f\colon \R^n \to N$ into a closed, connected, and oriented Riemannian $n$-manifold $N$, that is not a rational cohomology sphere, satisfies}
\[
\liminf_{r\to \infty} \frac{1}{r^\varepsilon} \int_{B^n(r)} J_f >0.
\]

In this paper, we prove a version of this Bonk-Heinonen growth result for signed quasiregular curves. For the definition of quasiregular curves, we give the auxiliary definition of an $n$-volume form on an $m$-manifold for $n\leq m$. We say that a smooth differential $n$-form $\omega \in \Omega^n(N)$ on a Riemannian $m$-manifold $N$ is an \emph{$n$-volume form for $n\leq m$} if $\omega$ is closed and pointwise nonvanishing.

A continuous mapping $f\colon M \to N$ between oriented Riemannian manifolds, $2\leq n=\dim M \leq \dim N$, is a \emph{$K$-quasiregular $\omega$-curve for $K\geq 1$ and an $n$-volume form $\omega \in \Omega^n(N)$} if $f$ belongs to the Sobolev space $W^{1,n}_{\loc}(M,N)$ and satisfies the distortion inequality
\[
(\norm{\omega} \circ f)\norm{Df}^n \leq K(\star f^\ast \omega)
\]
almost everywhere in $M$, where $\norm{\omega}$ is the \emph{pointwise comass norm of} $\omega$ given by
\[
\norm{\omega_x} = \max \left\{ \, \abs{\omega_x(v_1,\ldots,v_n)} \colon v_1,\ldots,v_n \text{ unit vectors in } T_xN  \, \right\}
\]
for every $x\in N$ and $(\star f^\ast \omega)$ is the function satisfying $(\star f^\ast \omega)\vol_M = f^\ast \omega$. Quasiregular curves have similar properties as quasiregular mappings to some extent; see \cite{OP2} and \cite{PA2}.

To define the subclass of signed quasiregular curves, we introduce the following terminology.

Let $N$ be a connected and oriented Riemannian $m$-manifold of dimension $m\geq 2$ and let $C_b^\infty(N)$ be the space of all smooth and bounded functions on $N$. For $\ell=1,\ldots,m$, we denote $\cZ_b^\ell(N)$ the space of all smooth, bounded, and closed $\ell$-forms.

For $2\leq n\leq m$, let $\cA_b^n(N)$ be the $C_b^\infty(N)$-algebra generated by products $\alpha \wedge \beta$ of forms $\alpha \in \cZ_b^{\ell}(N)$ and $\beta \in \cZ_b^{n-\ell}(N)$, that is, a smooth $n$-form $\omega \in \Omega^n(N)$ belongs to the space $\cA_b^n(N)$ if there exists $\varphi_i \in C_b^\infty(N)$, $\alpha_i \in \cZ_b^{\ell_i}(N)$, and $\beta_i \in \cZ_b^{n-\ell_i}(N)$ for $i=1,\ldots,j$ for which
\[
\omega= \sum_{i=1}^j \varphi_i \alpha_i \wedge \beta_i.
\]
For example, $\cA_b^n(\R^m)=\Omega_b^n(\R^m)$.

\begin{definition*}
A quasiregular $\omega$-curve $f\colon M \to N$ is \emph{signed with respect to the $n$-volume form $\omega \in \Omega^n(N)$} if $\omega= \sum_{i=1}^j \varphi_i \alpha_i \wedge \beta_i \in \cA_b^n(N)$, where $\varphi_i \in C_b^\infty(N)$, $\alpha_i \in \cZ_b^{\ell_i}(N)$, and $\beta_i \in \cZ_b^{n-\ell_i}(N)$ for $i=1,\ldots,j$, and the almost everywhere defined measurable functions $(\star f^\ast (\alpha_i \wedge \beta_i))$ do not change sign for any $i=1,\ldots,j$. In this case, we also say that the representation $\omega= \sum_{i=1}^j \varphi_i \alpha_i \wedge \beta_i$ is \emph{$f$-signed}.
\end{definition*}

For example, holomorphic curves are signed with respect to the standard symplectic form. In Section \ref{signedRep}, we show that, if $N$ is closed and not a rational cohomology sphere, then $\vol_N$ has a representation in $\cA_b^m(N)$ which is signed with respect to all quasiregular mappings.

We focus on signed quasiregular curves $\R^n \to N$ but our methods work also for quasiregular $\omega$-curves $\R^n \to N$ with respect to an $n$-volume form $\omega$ in the algebra $\cA_b^n(N)$ with constant coefficients. Formally, we say that an $n$-form $\omega= \sum_{i=1}^j \varphi_i \alpha_i \wedge \beta_i \in \cA_b^n(N)$ has an \emph{$\R$-linear representation} if the functions $\varphi_i$ are constant functions for $i=1,\ldots,j$.

We are now ready to state the main result.

\begin{thm}\label{mainResult}
Let $f\colon \R^n \to N$ be a nonconstant $K$-quasiregular $\omega$-curve, where $N$ is a connected and oriented Riemannian $m$-manifold, $m\geq n\geq 2$, $K\geq 1$, and $\omega \in \cA_b^n(N)$ is an $n$-volume form satisfying $\inf_N \norm{\omega} >0$. Suppose that either $\omega$ has an $f$-signed representation or an $\R$-linear representation. Then there exists a constant $\varepsilon = \varepsilon(n,K,\omega)>0$ for which
\begin{equation}\label{fastCondition}
\liminf_{r\to \infty} \frac{1}{r^\varepsilon}\int_{B^n(r)} f^\ast \omega >0.
\end{equation}
\end{thm}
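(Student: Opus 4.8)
The plan is to follow the two-step route dictated by the structure of the problem: first establish a weak reverse Hölder inequality for the scalar density $u = (\star f^\ast\omega)$, and then show that this inequality forces the polynomial lower bound \eqref{fastCondition}. Throughout I would work with $u = (\star f^\ast\omega)\ge 0$, observing that the distortion inequality together with the boundedness and nondegeneracy of the comass of $\omega$ (the hypotheses $\inf_N\norm{\omega}>0$ and $\omega\in\cA_b^n(N)$) give the two-sided comparison $c\norm{Df}^n \le u \le C\norm{Df}^n$ almost everywhere, so that $\int_{B^n(r)} f^\ast\omega = \int_{B^n(r)} u$ is comparable to the $n$-energy of $f$ on $B^n(r)$. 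Writing $a(r) = \int_{B^n(r)} f^\ast\omega$, the target \eqref{fastCondition} is the statement that $a$ grows at least like $r^\varepsilon$, and since $u\ge 0$ the function $a$ is nondecreasing and, as $f$ is nonconstant, eventually positive.

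For the implication ``weak reverse Hölder $\Rightarrow$ growth'', the key is that on the contractible domain $\R^n$ every closed pullback is exact. Fixing a representation $\omega = \sum_i \varphi_i\,\alpha_i\wedge\beta_i$, each $f^\ast\alpha_i$ and $f^\ast\beta_i$ is a closed Sobolev form on $\R^n$, so I would choose primitives $dA_i = f^\ast\alpha_i$ via a homotopy operator with the usual Sobolev and scaling bounds. In the $\R$-linear case the coefficients $\varphi_i$ are constant, so $f^\ast\omega = d\bigl(\sum_i\varphi_i\,A_i\wedge f^\ast\beta_i\bigr)$ is exact and Stokes' theorem converts $a(r)$ into a boundary integral over $\partial B^n(r)$. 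Estimating this boundary integral by Hölder's inequality and controlling $A_i$ through the Sobolev bound for the homotopy operator reduces everything to integrals of $\norm{Df}^n\sim u$ over spheres, i.e.\ to $a'(r)$, up to an exponent gap of size $1/(n-1)$ coming from the Sobolev embedding on $\partial B^n(r)$. It is exactly this gap that the weak reverse Hölder inequality (equivalently the higher integrability $\norm{Df}\in L^{n+\delta}_{\loc}$ that it yields through Gehring's lemma) is designed to close: it lets me raise the working exponent slightly and absorb the loss, producing a differential inequality of the form $a(r)\le C\,r\,a'(r)$ for almost every $r$. Integrating $\tfrac{d}{dr}\log a(r)\ge 1/(Cr)$ from a radius where $a$ is positive gives $a(r)\ge c\,r^{1/C}$, which is \eqref{fastCondition} with $\varepsilon = 1/C$.

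The signed case requires more care, and this is where the sign hypothesis does its work. When the $\varphi_i$ are not constant, $f^\ast\omega$ is no longer exact: differentiating $\sum_i\varphi_i\,A_i\wedge f^\ast\beta_i$ produces, besides $f^\ast\omega$, the extra terms $\sum_i f^\ast(d\varphi_i)\wedge A_i\wedge f^\ast\beta_i$, whose pointwise size is controlled by $\norm{Df}\,\abs{A_i}\,\norm{Df}^{n-\ell_i}$ using the boundedness of $d\varphi_i$. I would therefore prove the weak reverse Hölder inequality for $u$ as a Caccioppoli-type estimate, and it is precisely here that the assumption that each $(\star f^\ast(\alpha_i\wedge\beta_i))$ has a fixed sign is essential: it guarantees that the individual contributions $\varphi_i(\star f^\ast(\alpha_i\wedge\beta_i))$ combine without cancellation, so that $u$ is genuinely comparable to $\sum_i\abs{\star f^\ast(\alpha_i\wedge\beta_i)}$ and the error terms above can be dominated by a controllable fraction of the main term and absorbed. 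With the weak reverse Hölder inequality in hand, the passage to the growth bound is then identical to the $\R$-linear case.

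I expect the main obstacle to be the proof of the weak reverse Hölder inequality in the signed, non-constant-coefficient setting: producing primitives with the correct scaling on balls while simultaneously exploiting the sign condition to prevent cancellation and to absorb the coefficient-derivative error terms is the delicate point, whereas the deduction of \eqref{fastCondition} from the inequality via the differential-inequality argument is comparatively routine once the exponent bookkeeping from the Sobolev embedding on spheres is carried out.
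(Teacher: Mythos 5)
Your two-step skeleton (a weak reverse H\"older inequality for $u=(\star f^\ast\omega)$, then growth) is exactly the paper's, and your toolkit for the first step---primitives obtained from the Iwaniec--Lutoborski homotopy operator, integration by parts, subcritical H\"older exponents, Gehring's lemma---is the right one. However, each of your two key mechanisms has a genuine gap.

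First, the sign hypothesis does not do what you claim. It does \emph{not} make $u$ comparable to $\sum_i\abs{\star f^\ast(\alpha_i\wedge\beta_i)}$: the coefficients $\varphi_i$ may take negative values, and nothing prevents cancellation \emph{between} different terms of the sum. Nor can it absorb the error terms $f^\ast(d\varphi_i)\wedge A_i\wedge f^\ast\beta_i$ that arise when you differentiate the coefficients: these contain the non-canonical primitives $A_i$ and therefore carry no sign whatsoever, and pointwise they are of size $\abs{A_i}\,\norm{Df}^{n+1-\ell_i}$, a supercritical power of $\norm{Df}$ that $\int\norm{Df}^n$ does not control. The paper's use of the hypothesis is different and produces no error terms at all: since each $(\star f^\ast(\alpha_i\wedge\beta_i))$ has a fixed sign,
\[
\int_B \psi\,(\varphi_i\circ f)\, f^\ast(\alpha_i\wedge\beta_i)\;\le\;\norm{\varphi_i}_\infty\abs{\int_B \psi\, f^\ast(\alpha_i\wedge\beta_i)},
\]
so the coefficient is replaced by a constant \emph{before} any integration by parts, with the absolute value landing outside the integral. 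After this one observation the signed case is literally the constant-coefficient case: integrating by parts against $d\psi$, using only that $\alpha_i$ and $\beta_i$ are closed, gives the Caccioppoli-type estimate with no remainder to absorb.

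Second, your passage from the weak reverse H\"older inequality to \eqref{fastCondition} via Stokes' theorem and a differential inequality $a(r)\le C\,r\,a'(r)$ is not established, and it is not a consequence of the inequality: any trace or Morrey bound for the primitives $A_i$ on $\partial B^n(r)$ is in terms of integrals of $\norm{Df}$ over \emph{balls}, so what actually comes out is a mixed inequality such as $a(r)^{\frac{n}{n-1}}\le C\,a(2r)^{\frac{1}{n-1}}\,r\,a'(r)$, which you cannot close without a doubling property for $a$ that you do not have; moreover, the weak reverse H\"older inequality permits $u$ to vanish to bounded order on a sphere, in which case $a'(r)\ll a(r)/r$ on a whole interval of radii and the pointwise inequality simply fails. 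But none of this machinery is needed, because the volume-normalized inequality with exponent $p>1$ \emph{is already} the growth bound. Choose $r_0$ with $\int_{B^n(r_0/2)}u^p>0$, which is possible since $f$ is nonconstant and $u\ge K^{-1}\inf_N\norm{\omega}\,\norm{Df}^n$. Then, for $r\ge r_0$,
\[
C_p\,\frac{1}{\abs{B^n(r)}}\int_{B^n(r)}f^\ast\omega\;\ge\;\left(\frac{1}{\abs{B^n(r/2)}}\int_{B^n(r/2)}u^p\right)^{\frac{1}{p}}\;\ge\;\abs{B^n(r/2)}^{-\frac{1}{p}}\left(\int_{B^n(r_0/2)}u^p\right)^{\frac{1}{p}},
\]
hence $\int_{B^n(r)}f^\ast\omega\ge c\,r^{\,n(1-\frac{1}{p})}$ with $c>0$, which is \eqref{fastCondition} with $\varepsilon=n\left(1-\frac{1}{p}\right)$. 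This computation---the growth comes from the mismatch between the normalizations $\abs{B}^{-1}$ and $\abs{B}^{-1/p}$ on the two sides---is the paper's entire second step (Theorem \ref{secondMainResult}); the differential-inequality and exceptional-set technology you sketch is what the paper reserves for the equidistribution statement (Theorem \ref{valueResult}), not for Theorem \ref{mainResult}.
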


We say that a quasiregular $\omega$-curve $f\colon \R^n \to N$ satisfying the growth condition \eqref{fastCondition} has \emph{fast growth of order} $\varepsilon$. In particular, Theorem \ref{mainResult} yields the following corollary.

\begin{cor}
A nonconstant signed quasiregular $\omega$-curve $f\colon \R^n \to N$ into a closed, connected and oriented Riemannian manifold $N$ with respect to an $n$-volume form $\omega \in \cA_b^n(N)$ has fast growth.
\end{cor}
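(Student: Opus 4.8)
The plan is to obtain the corollary as a direct specialization of Theorem~\ref{mainResult}. Since $f$ is a nonconstant signed quasiregular $\omega$-curve, it is by definition a nonconstant $K$-quasiregular $\omega$-curve for some $K\geq 1$, and the adjective \emph{signed} means precisely that $\omega$ admits an $f$-signed representation. The domain is $\R^n$, the target $N$ is connected and oriented, and the inequalities $m=\dim N\geq n\geq 2$ are built into the definition of a quasiregular $\omega$-curve. Hence every hypothesis of Theorem~\ref{mainResult} is in hand except the requirement $\inf_N \norm{\omega}>0$, and verifying this is the only real content of the proof.

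To establish $\inf_N\norm{\omega}>0$ I would invoke the compactness of $N$. First I would note that the pointwise comass $x\mapsto \norm{\omega_x}$ is continuous on $N$: on each fiber of the bundle of $n$-covectors the comass is a norm depending continuously on the covector, and $\omega$ is smooth, so the composition $x\mapsto\norm{\omega_x}$ is continuous. Next, because $\omega$ is an $n$-volume form it is pointwise nonvanishing, so $\norm{\omega_x}>0$ for every $x\in N$. Finally, since $N$ is closed it is compact, and therefore the continuous positive function $x\mapsto\norm{\omega_x}$ attains a minimum, which is positive; thus $\inf_N\norm{\omega}=\min_N\norm{\omega}>0$.

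With all hypotheses confirmed, Theorem~\ref{mainResult} furnishes a constant $\varepsilon=\varepsilon(n,K,\omega)>0$ for which $\liminf_{r\to\infty} r^{-\varepsilon}\int_{B^n(r)} f^\ast\omega>0$, which is exactly the assertion that $f$ has fast growth of order $\varepsilon$. I do not expect any genuine obstacle: the corollary is a straightforward instance of the theorem, and the one step that warrants a word of justification is the continuity of the comass norm, which is what converts the compactness of $N$ into the quantitative lower bound $\inf_N\norm{\omega}>0$.
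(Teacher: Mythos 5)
Your proposal is correct and matches the paper's (implicit) argument: the corollary is stated as a direct specialization of Theorem~\ref{mainResult}, and the only hypothesis not immediate from the definitions is $\inf_N\norm{\omega}>0$, which you rightly derive from the continuity and positivity of the pointwise comass together with the compactness of the closed manifold $N$.
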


Theorem \ref{mainResult} yields the following result that may be viewed as a cohomological value distribution result for signed quasiregular curves; see Section \ref{distributionSection}.

\begin{thm}\label{valueResult}
Let $f\colon \R^n \to N$ be a nonconstant $K$-quasiregular $\omega_0$-curve, where $N$ is a connected and oriented Riemannian $m$-manifold, $m\geq n\geq 2$, $K\geq 1$, and $\omega_0 \in \cA_b^n(N)$ is an $n$-volume form satisfying $\inf_N \norm{\omega_0} >0$. Suppose that either $\omega_0$ has an $f$-signed representation or an $\R$-linear representation. Then, for every $\omega \in \Omega^n(N)$ satisfying $\omega_0-\omega=d\tau$ for some $\tau \in \Omega_b^{n-1}(N)$, there exists a set $E\subset [1,\infty)$ for which $\int_E \frac{\mathrm{d}r}{r} < \infty$ and
\[
\liminf_{\substack{r\to \infty \\ r\notin E}} \frac{1}{r^\varepsilon}\int_{B^n(r)} f^\ast \omega >0,
\]
where $\varepsilon=\varepsilon(n,K,\omega_0)>0$. In particular, if $(\star f^\ast \omega) \geq 0$ almost everywhere in $\R^n$, then
\[
\liminf_{r\to \infty} \frac{1}{r^\varepsilon}\int_{B^n(r)} f^\ast \omega >0.
\]
\end{thm}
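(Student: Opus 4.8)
The plan is to deduce the statement from Theorem \ref{mainResult} by controlling the error incurred when $\omega_0$ is replaced by the cohomologous form $\omega = \omega_0 - d\tau$. Write $F(r) = \int_{B^n(r)} f^\ast\omega_0$ and $A(r) = \int_{B^n(r)}\norm{Df}^n$. Since $\omega_0 \in \cA_b^n(N)$ is bounded, we have $C_0 := \sup_N\norm{\omega_0} < \infty$, and by hypothesis $c_0 := \inf_N\norm{\omega_0} > 0$; together with the distortion inequality and the comass estimate $\abs{\star f^\ast\omega_0} \le (\norm{\omega_0}\circ f)\norm{Df}^n$ this yields the pointwise comparison $\tfrac{c_0}{K}\norm{Df}^n \le (\star f^\ast\omega_0) \le C_0\norm{Df}^n$. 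In particular $(\star f^\ast\omega_0) \ge 0$ and $\tfrac{c_0}{K}A(r) \le F(r) \le C_0 A(r)$. By Theorem \ref{mainResult} there is $c>0$ with $F(r) \ge c\,r^\varepsilon$ for all large $r$, so $A(r)\to\infty$. Since $f^\ast\omega = f^\ast\omega_0 - d(f^\ast\tau)$, an application of Stokes' theorem gives, for almost every $r$,
\begin{equation*}
\int_{B^n(r)} f^\ast\omega = F(r) - \int_{S^{n-1}(r)} f^\ast\tau, \qquad S^{n-1}(r) := \partial B^n(r),
\end{equation*}
so it suffices to dominate the boundary term by $\tfrac{c_0}{2K}A(r)$ outside a set of finite logarithmic measure.

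To estimate the boundary term I would use that $\tau \in \Omega_b^{n-1}(N)$ has bounded comass $M := \sup_N\norm{\tau} < \infty$, so $\abs{f^\ast\tau} \le M\norm{Df}^{n-1}$ pointwise. The function $A$ is absolutely continuous and nondecreasing, and by the coarea formula $A'(r) = \int_{S^{n-1}(r)}\norm{Df}^n\,d\mathcal{H}^{n-1}$ for almost every $r$. Hölder's inequality on $S^{n-1}(r)$ with the conjugate exponents $\tfrac{n}{n-1}$ and $n$, together with $\mathcal{H}^{n-1}(S^{n-1}(r)) \asymp r^{n-1}$, then gives
\begin{equation*}
\abs{\int_{S^{n-1}(r)} f^\ast\tau} \le M\int_{S^{n-1}(r)}\norm{Df}^{n-1}\,d\mathcal{H}^{n-1} \le C\,(r\,A'(r))^{(n-1)/n},
\end{equation*}
where $C$ depends only on $n$ and $M$, since $r^{(n-1)/n}(A'(r))^{(n-1)/n} = (r\,A'(r))^{(n-1)/n}$.

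The final analytic ingredient is a Borel-type calculus lemma applied to $A$, with $\delta := \tfrac1{n-1} > 0$. For the absolutely continuous, nondecreasing $A$ with $A\to\infty$, the set where $r\,A'(r) > \big(\tfrac{c_0}{2CK}\big)^{n/(n-1)} A(r)^{1+\delta}$ has finite logarithmic measure: the substitution $s = A(e^t)$ gives $\int_{\{r A'(r) > c_\ast A(r)^{1+\delta}\}}\tfrac{dr}{r} \le \tfrac1{c_\ast}\int_{A(r_0)}^{\infty}\tfrac{ds}{s^{1+\delta}} < \infty$. Let $E$ be this exceptional set. For $r \notin E$ the boundary term is at most $\tfrac{c_0}{2K}A(r)$, whence
\begin{equation*}
\int_{B^n(r)} f^\ast\omega \ge F(r) - \tfrac{c_0}{2K}A(r) \ge \tfrac{c_0}{2K}A(r) \ge \tfrac{c_0}{2KC_0}F(r) \ge \tfrac{c_0 c}{2KC_0}\,r^\varepsilon
\end{equation*}
for all large $r \notin E$, which is the first assertion. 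For the \emph{in particular}, if $(\star f^\ast\omega) \ge 0$ almost everywhere then $r \mapsto \int_{B^n(r)} f^\ast\omega$ is nondecreasing; as $\int_{E}\tfrac{dr}{r}<\infty$, for large $r$ the interval $[r/2,r]$ meets $[1,\infty)\setminus E$, and picking $r'\in[r/2,r]\setminus E$ gives $\int_{B^n(r)} f^\ast\omega \ge \int_{B^n(r')} f^\ast\omega \ge 2^{-\varepsilon}\tfrac{c_0 c}{2KC_0}\,r^\varepsilon$ for all large $r$.

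I expect the \textbf{main obstacle} to be the rigorous justification of Stokes' theorem for the Sobolev pullback, i.e.\ the identity $\int_{B^n(r)} d(f^\ast\tau) = \int_{S^{n-1}(r)} f^\ast\tau$ for almost every $r$. Here the coefficients of $f^\ast\tau$ lie only in $L^{n/(n-1)}_{\loc}$ while $d(f^\ast\tau) = f^\ast(d\tau) \in L^1_{\loc}$, so one must argue through the distributional exterior derivative and the coarea formula (or by mollifying $f$ and passing to the limit) that the trace of $f^\ast\tau$ on $S^{n-1}(r)$ is well defined and that Stokes holds for almost every radius. Once this is in place, the remaining comass/Hölder estimate and the calculus lemma are routine.
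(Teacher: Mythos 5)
Your proposal is correct and follows essentially the same route as the paper: the paper also deduces Theorem \ref{valueResult} from Theorem \ref{mainResult} by converting $\int_{B^n(r)}f^\ast(\omega_0-\omega)$ into the sphere integral $\int_{\bs^{n-1}(r)}f^\ast\tau$ via Stokes, bounding that term through the comass estimate, H\"older's inequality and quasiregularity by a power of $r$ times the derivative of the growth function, integrating the resulting differential inequality to show the exceptional set has finite logarithmic measure (this is Theorem \ref{equidistribution} combined with Proposition \ref{boundedCohomologyDependence}), and finishing the case $(\star f^\ast\omega)\geq 0$ with the identical halving trick on $[r/2,r]$. The differences are only organizational — the paper packages the boundary estimate as an equidistribution statement ($\int_{B^n(r)}f^\ast\omega\,/\int_{B^n(r)}f^\ast\omega_0\to 1$ off $E$, running the differential inequality on $A_{\omega_0,f}$ rather than on your energy $A(r)=\int_{B^n(r)}\norm{Df}^n$, whereas you get the weaker but sufficient ratio bound $\geq\tfrac12$) — and the a.e.-radius validity of Stokes/coarea that you flag as the main obstacle is equally implicit in the paper's proof and is harmless, since the Lebesgue-null set of bad radii can be absorbed into $E$ without affecting its logarithmic measure.
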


\begin{rem}
Note that, since $\omega_0 \in \cA_b^n(N)$, the set
\[
\{ \, \omega_0 + d\tau \, \colon \, \tau \in \Omega_b^{n-1}(N) \, \}
\]
contains the bounded de Rham cohomology class of the $n$-volume form $\omega_0 \in \Omega^n(N)$; see e.g. \cite{WI}. In particular, if $N$ is closed, then this set is exactly the de Rham cohomology class of $\omega_0$.
\end{rem}

\begin{cor}
A nonconstant signed quasiregular $\omega_0$-curve $f\colon \R^n \to N$ into a closed, connected and oriented Riemannian manifold $N$ with respect to an $n$-volume form $\omega_0 \in \cA_b^n(N)$ satisfies, for every $\omega \in \Omega^n(N)$ in the de Rham cohomology class of $\omega_0$,
\[
\liminf_{\substack{r\to \infty \\ r\notin E}} \frac{1}{r^\varepsilon}\int_{B^n(r)} f^\ast \omega >0,
\]
where $\varepsilon >0$ and the exception set $E$ has finite logarithmic measure.
\end{cor}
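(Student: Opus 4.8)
The plan is to deduce this corollary as a direct specialization of Theorem \ref{valueResult} to the case of a closed target manifold, so the work consists entirely of verifying that the hypotheses of that theorem become automatic when $N$ is closed and of translating the phrase ``in the de Rham cohomology class of $\omega_0$'' into the exactness condition required there. First I would record the standing data: $f\colon \R^n \to N$ is, by the definition of a quasiregular $\omega_0$-curve, $K$-quasiregular for some $K\geq 1$, and ``signed'' means precisely that $\omega_0$ admits an $f$-signed representation in $\cA_b^n(N)$, which is exactly one of the two alternative hypotheses of Theorem \ref{valueResult}. Since $N$ is closed, it is compact, so every smooth form and function on $N$ is automatically bounded; in particular $\cA_b^n(N)=\cA^n(N)$ and $\Omega_b^{n-1}(N)=\Omega^{n-1}(N)$, and the hypothesis $\omega_0 \in \cA_b^n(N)$ is satisfied. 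Moreover, the map $x\mapsto \norm{\omega_{0,x}}$ is continuous and, because $\omega_0$ is an $n$-volume form and hence pointwise nonvanishing, strictly positive on $N$; a positive continuous function on a compact space attains a positive minimum, so $\inf_N \norm{\omega_0}>0$ holds automatically. Thus all structural hypotheses of Theorem \ref{valueResult} are in force.

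Next I would reformulate the cohomological assumption. Suppose $\omega \in \Omega^n(N)$ lies in the de Rham cohomology class $[\omega_0]$. Then $\omega-\omega_0$ is exact, say $\omega-\omega_0 = d\sigma$ for some $\sigma \in \Omega^{n-1}(N)$, and by compactness $\sigma$ is bounded, so $\sigma \in \Omega_b^{n-1}(N)$. Setting $\tau=-\sigma$ gives $\omega_0-\omega = d\tau$ with $\tau \in \Omega_b^{n-1}(N)$, which is exactly the relation assumed in Theorem \ref{valueResult}. This is just the closed case of the preceding Remark, which asserts that for closed $N$ the set $\{\,\omega_0+d\tau \colon \tau \in \Omega_b^{n-1}(N)\,\}$ coincides with the full de Rham cohomology class of $\omega_0$; I would cite the Remark to justify this identification cleanly.

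With both checks in place, Theorem \ref{valueResult} applies verbatim: it furnishes a constant $\varepsilon=\varepsilon(n,K,\omega_0)>0$ and a set $E\subset[1,\infty)$ with $\int_E \frac{\mathrm{d}r}{r}<\infty$ such that
\[
\liminf_{\substack{r\to \infty \\ r\notin E}} \frac{1}{r^\varepsilon}\int_{B^n(r)} f^\ast \omega >0.
\]
Observing that the condition $\int_E \frac{\mathrm{d}r}{r}<\infty$ is precisely the statement that $E$ has finite logarithmic measure completes the proof. I do not anticipate any genuine obstacle here, since the corollary is a specialization rather than a new result; the only points requiring care are confirming that the boundedness hypotheses of Theorem \ref{valueResult} degenerate to triviality under compactness and that the cohomological reformulation via the Remark is legitimate.
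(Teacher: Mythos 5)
Your proof is correct and follows exactly the route the paper intends: the corollary is a direct specialization of Theorem \ref{valueResult}, with compactness of $N$ supplying both $\inf_N \norm{\omega_0}>0$ and the boundedness of the primitive $\tau$, so that (as noted in the paper's Remark) the set $\{\,\omega_0+d\tau \colon \tau \in \Omega_b^{n-1}(N)\,\}$ is the full de Rham class of $\omega_0$. All the verifications you carry out are the right ones, and there is no gap.
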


Theorem \ref{mainResult} immediately yields the following corollary for constant coefficient $n$-volume forms. We identify constant coefficient $n$-volume forms in $\R^m$ with $n$-covectors in $\Lambda^n(\R^m)$ in the statement.

\begin{cor}
Let $f\colon \R^n \to \R^m$ be a nonconstant $K$-quasiregular $\omega$-curve, where $m\geq n\geq 2$, $K\geq 1$, and $\omega \in \Lambda^n(\R^m)$ is a nonzero covector. Then $f$ has fast growth of order $\varepsilon=\varepsilon(n,K,\omega)>0$.
\end{cor}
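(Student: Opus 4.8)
The plan is to deduce this corollary directly from Theorem \ref{mainResult} by verifying that a nonzero covector $\omega \in \Lambda^n(\R^m)$, viewed as a constant coefficient differential $n$-form on $\R^m$, satisfies all the hypotheses of the main theorem, including the existence of an $\R$-linear representation. Since the corollary imposes no sign condition on $f$, it is the $\R$-linear branch of the hypothesis in Theorem \ref{mainResult} that must be used.

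First I would interpret $\omega \in \Lambda^n(\R^m)$ as the constant coefficient $n$-form on $\R^m$ that assigns the covector $\omega$ to every point. Because the coefficients are constant, $d\omega = 0$, so $\omega$ is closed; because $\omega$ is nonzero as a covector, the form is pointwise nonvanishing. Hence $\omega$ is an $n$-volume form on $\R^m$ in the sense of the introduction. The form is moreover bounded, so by the identity $\cA_b^n(\R^m) = \Omega_b^n(\R^m)$ recorded above we have $\omega \in \cA_b^n(\R^m)$. The pointwise comass norm $\norm{\omega}$ is independent of the base point, since the coefficients are constant, and it is strictly positive, since $\omega \neq 0$; therefore $\inf_{\R^m}\norm{\omega} = \norm{\omega} > 0$, as required.

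Next I would exhibit an explicit $\R$-linear representation of $\omega$. Writing $\omega = \sum_{I} c_I\, dx_{i_1}\wedge\cdots\wedge dx_{i_n}$ in the standard basis of $\Lambda^n(\R^m)$, indexed by multi-indices $I = (i_1 < \cdots < i_n)$ with $c_I \in \R$, each coordinate one-form $dx_{i_j}$ is smooth, bounded, and closed, hence lies in $\cZ_b^1(\R^m)$, while each $dx_{i_2}\wedge\cdots\wedge dx_{i_n}$ lies in $\cZ_b^{n-1}(\R^m)$. Setting $\alpha_I = dx_{i_1}$, $\beta_I = dx_{i_2}\wedge\cdots\wedge dx_{i_n}$, and $\varphi_I = c_I$ yields
\[
\omega = \sum_I \varphi_I\, \alpha_I \wedge \beta_I,
\]
with every coefficient $\varphi_I$ constant, which is an $\R$-linear representation in the sense of the definition.

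With these verifications in place, Theorem \ref{mainResult} applies to the nonconstant $K$-quasiregular $\omega$-curve $f\colon \R^n \to \R^m$ via the $\R$-linear representation, and furnishes a constant $\varepsilon = \varepsilon(n,K,\omega) > 0$ with $\liminf_{r\to\infty} r^{-\varepsilon}\int_{B^n(r)} f^\ast\omega > 0$, that is, $f$ has fast growth of order $\varepsilon$. There is no genuine obstacle here beyond the bookkeeping of the identifications, since the entire analytic content is already carried by Theorem \ref{mainResult}; the only point requiring attention is to confirm that the $\R$-linear representation hypothesis, rather than the $f$-signed one, is the applicable branch, as no sign condition on $f$ is available.
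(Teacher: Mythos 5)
Your proposal is correct and follows exactly the paper's route: the paper derives this corollary immediately from Theorem \ref{mainResult} by identifying the nonzero covector $\omega \in \Lambda^n(\R^m)$ with a constant coefficient $n$-volume form, which trivially lies in $\cA_b^n(\R^m)$, has $\inf_{\R^m}\norm{\omega} = \norm{\omega} > 0$, and admits an $\R$-linear representation. Your explicit verification of these hypotheses (in particular the basis decomposition $\omega = \sum_I c_I\, dx_{i_1}\wedge(dx_{i_2}\wedge\cdots\wedge dx_{i_n})$ as the $\R$-linear representation) is precisely the bookkeeping the paper leaves implicit.
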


If $\omega$ is an $n$-volume form that splits into $\omega = \omega_1 \wedge \omega_2$, where each $\omega_i$ is a bounded lower dimensional volume form, then every quasiregular $\omega$-curve is signed. Thus, Theorem \ref{mainResult} yields the following corollary.

\begin{cor}
Let $f\colon \R^n \to N$ be a nonconstant $K$-quasiregular $\omega$-curve, where $N$ is a connected and oriented Riemannian $m$-manifold, $m\geq n\geq 2$, $K\geq 1$, and $\omega \in \Omega^n(N)$ is an $n$-volume form satisfying $\inf_N \norm{\omega} >0$. Suppose that $\omega$ splits into $\omega = \omega_1 \wedge \omega_2$, where each $\omega_i \in \Omega_b^{n_i}(N)$ is an $n_i$-volume form for $n_i \geq 1$. Then $f$ has fast growth of order $\varepsilon=\varepsilon(n,K,\omega_1,\omega_2)>0$.
\end{cor}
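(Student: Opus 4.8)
The plan is to reduce the statement directly to Theorem \ref{mainResult} by exhibiting a single-term $f$-signed representation of $\omega$ in the algebra $\cA_b^n(N)$. Since each factor $\omega_i \in \Omega_b^{n_i}(N)$ is by definition closed, bounded, and smooth, we have $\omega_1 \in \cZ_b^{n_1}(N)$ and $\omega_2 \in \cZ_b^{n_2}(N)$, where $n_1+n_2=n$ and $n_1,n_2\geq 1$. Taking $\varphi=1$, the constant function, the product $\omega = \omega_1 \wedge \omega_2 = \varphi\,\omega_1 \wedge \omega_2$ is then visibly a single generating product of the form required in the definition of $\cA_b^n(N)$, so $\omega \in \cA_b^n(N)$ with this one-term representation.

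It remains to check that this representation is $f$-signed, that is, that the single function $(\star f^\ast(\omega_1 \wedge \omega_2)) = (\star f^\ast \omega)$ does not change sign. This I expect to be immediate from the distortion inequality: since $f$ is a $K$-quasiregular $\omega$-curve,
\[
(\norm{\omega}\circ f)\,\norm{Df}^n \leq K(\star f^\ast \omega)
\]
almost everywhere in $\R^n$, and the left-hand side is nonnegative while $K\geq 1$, so $(\star f^\ast \omega)\geq 0$ almost everywhere. Hence the representation $\omega = 1\cdot \omega_1 \wedge \omega_2$ is $f$-signed, and in fact this is exactly the reason, noted in the text preceding the statement, that every quasiregular $\omega$-curve with splitting $\omega=\omega_1\wedge\omega_2$ is automatically signed.

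Finally I would invoke Theorem \ref{mainResult} directly: its hypotheses are met, since $\omega \in \cA_b^n(N)$ is an $n$-volume form with $\inf_N \norm{\omega} >0$ that admits an $f$-signed representation. The theorem therefore yields fast growth of order $\varepsilon=\varepsilon(n,K,\omega)>0$, and since $\omega=\omega_1\wedge\omega_2$ this dependence takes the asserted form $\varepsilon(n,K,\omega_1,\omega_2)$. I do not anticipate a genuine obstacle here: the argument is essentially bookkeeping, and the only substantive point, the sign of $(\star f^\ast \omega)$, is forced by the distortion inequality rather than requiring any separate reasoning.
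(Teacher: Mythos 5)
Your proof is correct and is exactly the argument the paper intends: the splitting $\omega = 1\cdot\omega_1\wedge\omega_2$ gives a one-term representation in $\cA_b^n(N)$ (indeed both $f$-signed and $\R$-linear), the sign of $(\star f^\ast\omega)$ is forced by the distortion inequality, and Theorem \ref{mainResult} then applies directly. No gaps.
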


In particular, we have the following corollary for quasiregular curves into products of closed manifolds.

\begin{cor}
For $i=1,2$, let $N_i$ be a closed, connected and oriented Riemannian $m_i$-manifold, $m_i \geq n_i \geq 1$, and let $\pi_i \colon N_1\times N_2 \to N_i$ be a projection. Let also $n=n_1+n_2$ and let $\omega \in \Omega^n(N_1\times N_2)$ be an $n$-volume form $\omega = \pi_1^\ast \omega_1 \wedge \pi_2^\ast \omega_2$, where each $\omega_i \in \Omega^{n_i}(N_i)$ is an $n_i$-volume form. Then every nonconstant signed quasiregular $\omega$-curve $f\colon \R^n \to N_1\times N_2$ has fast growth.
\end{cor}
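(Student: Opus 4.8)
The plan is to deduce this from the preceding corollary by realizing $\omega$ as a wedge of two bounded lower-dimensional volume forms on the product and then checking the remaining hypotheses. Put $N = N_1 \times N_2$, which is closed, connected, and oriented (with the product orientation) of dimension $m = m_1 + m_2$. Since $m_i \geq n_i \geq 1$, we have $m \geq n = n_1 + n_2 \geq 2$, so the dimensional constraints of the preceding corollary are met. Writing $\eta_i := \pi_i^\ast \omega_i \in \Omega_b^{n_i}(N)$, it then suffices to verify that each $\eta_i$ is an $n_i$-volume form, that $\omega = \eta_1 \wedge \eta_2$ is an $n$-volume form, and that $\inf_N \norm{\omega} > 0$. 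Note that the preceding corollary applies to \emph{all} nonconstant quasiregular $\omega$-curves, so the (automatic, in the splitting case) signedness of $f$ plays no role in the argument.

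First I would record the easy properties. Each $\eta_i$ is closed, since $d\eta_i = \pi_i^\ast d\omega_i = 0$, and bounded, being a smooth form on the compact manifold $N$; the same reasoning gives closedness of $\omega = \eta_1 \wedge \eta_2$. Likewise, once $\omega$ is shown to be pointwise nonvanishing, the comass norm $\norm{\omega}$ is a positive continuous function on the compact manifold $N$ and hence attains a positive minimum, so $\inf_N \norm{\omega} > 0$. Thus everything reduces to the pointwise nonvanishing of $\eta_1$, $\eta_2$, and $\eta_1 \wedge \eta_2$.

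For this I would exploit the product splitting $T_{(x_1,x_2)}N = T_{x_1}N_1 \oplus T_{x_2}N_2$, under which $d\pi_i$ is the projection onto the $i$-th summand. Fix $p = (x_1,x_2) \in N$. Since each $\omega_i$ is nonvanishing, choose $u_1,\dots,u_{n_1} \in T_{x_1}N_1$ with $(\omega_1)_{x_1}(u_1,\dots,u_{n_1}) \neq 0$ and $w_1,\dots,w_{n_2} \in T_{x_2}N_2$ with $(\omega_2)_{x_2}(w_1,\dots,w_{n_2}) \neq 0$, regarded as vectors in $T_pN$ via the splitting. Because $\eta_1 = \pi_1^\ast\omega_1$ annihilates the $T_{x_2}N_2$ summand and $\eta_2 = \pi_2^\ast\omega_2$ annihilates the $T_{x_1}N_1$ summand, the antisymmetrization defining $(\eta_1 \wedge \eta_2)_p(u_1,\dots,u_{n_1},w_1,\dots,w_{n_2})$ collapses to a nonzero multiple of the single surviving contribution $(\omega_1)_{x_1}(u_1,\dots,u_{n_1})\,(\omega_2)_{x_2}(w_1,\dots,w_{n_2}) \neq 0$. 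This simultaneously shows that each $\eta_i$ is nonvanishing and that $\omega = \eta_1\wedge\eta_2$ is nonvanishing, so $\eta_1,\eta_2$ are $n_i$-volume forms with $n_i \geq 1$ and $\omega$ is an $n$-volume form. The step I expect to require the most care is precisely this degeneracy check: individually nonvanishing forms need not have nonvanishing wedge, and it is the complementarity of the two factor directions in the product that rescues us. With all hypotheses verified, the preceding corollary yields fast growth of order $\varepsilon = \varepsilon(n,K,\eta_1,\eta_2) > 0$, completing the proof.
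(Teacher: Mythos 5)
Your proposal is correct and takes essentially the same route as the paper: the paper states this corollary as an immediate specialization of the preceding splitting corollary (itself a consequence of Theorem \ref{mainResult}), which is exactly your deduction. Your verifications that $\pi_i^\ast\omega_i$ are closed, bounded, pointwise nonvanishing $n_i$-volume forms on the compact product, that the wedge is nonvanishing by complementarity of the factor directions, and that $\inf_N \norm{\omega} > 0$ by compactness simply fill in the details the paper leaves implicit.
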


Every quasiregular mapping is signed if the target manifold is closed and not a rational cohomology sphere. Hence, we also obtain the growth result of Bonk and Heinonen as a corollary of Theorem \ref{mainResult}.

\begin{cor}\label{BHcorollary}
Let $f\colon \R^n \to N$ be a nonconstant $K$-quasiregular mapping, where $N$ is a closed, connected, and oriented Riemannian $n$-manifold which is not a rational cohomology sphere, $n\geq 2$, and $K\geq 1$. Then $f$ has fast growth of order $\varepsilon=\varepsilon(n,K,\vol_N)>0$.
\end{cor}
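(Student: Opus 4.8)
The plan is to deduce Corollary \ref{BHcorollary} directly from Theorem \ref{mainResult} by recognizing a quasiregular mapping as a quasiregular $\vol_N$-curve and checking that the hypotheses of the theorem are satisfied for the choice $\omega=\vol_N$.

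First I would verify that a nonconstant $K$-quasiregular mapping $f\colon \R^n \to N$ is exactly a nonconstant $K$-quasiregular $\vol_N$-curve. Since $N$ is an oriented Riemannian $n$-manifold, the pointwise comass norm of the Riemannian volume form satisfies $\norm{\vol_N}\equiv 1$, and for a Sobolev mapping $\star f^\ast\vol_N = J_f$. Hence the quasiregular $\vol_N$-curve distortion inequality $(\norm{\vol_N}\circ f)\norm{Df}^n \le K(\star f^\ast\vol_N)$ reduces to $\norm{Df}^n \le K J_f$, which is precisely the quasiregularity inequality, and the two $W^{1,n}_{\loc}$ hypotheses coincide. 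Moreover, $\vol_N$ is a top-degree form on an $n$-manifold, hence automatically closed, and it is nowhere vanishing, so it is an $n$-volume form with $\inf_N\norm{\vol_N}=1>0$. Finally $\int_{B^n(r)} f^\ast\vol_N = \int_{B^n(r)} J_f$, so fast growth of the $\vol_N$-curve is exactly the asserted Bonk--Heinonen growth bound.

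The one remaining hypothesis is that $\vol_N\in\cA_b^n(N)$ and that it admits an $f$-signed representation. Here I would invoke the construction of Section \ref{signedRep}: since $N$ is closed and not a rational cohomology sphere, $\vol_N$ has a representation $\vol_N=\sum_i\varphi_i\,\alpha_i\wedge\beta_i\in\cA_b^n(N)$ that is signed with respect to every quasiregular mapping, and in particular with respect to $f$. The mechanism is that, writing each top-degree product as $\alpha_i\wedge\beta_i=\psi_i\,\vol_N$, one obtains $\star f^\ast(\alpha_i\wedge\beta_i)=(\psi_i\circ f)\,J_f$; since a quasiregular mapping is sense-preserving with $J_f\ge 0$ almost everywhere, it suffices to arrange each function $\psi_i$ to be of constant sign on $N$. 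This sign-definite factorization of $\vol_N$ into products of bounded closed forms is the only substantive point, and it is exactly where Poincaré duality and the failure of $N$ to be a rational cohomology sphere enter.

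With all hypotheses in place, applying Theorem \ref{mainResult} to $\omega=\vol_N$ produces a constant $\varepsilon=\varepsilon(n,K,\vol_N)>0$ with $\liminf_{r\to\infty} r^{-\varepsilon}\int_{B^n(r)} f^\ast\vol_N>0$, that is, $f$ has fast growth of order $\varepsilon$. I expect the main obstacle to lie not in this deduction, which is essentially a translation between the quasiregular and $\vol_N$-curve formalisms, but in the prerequisite existence of the signed representation of $\vol_N$; once that is granted, the corollary follows immediately.
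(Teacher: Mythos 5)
Your proposal is correct and takes essentially the same route as the paper: the corollary is obtained by viewing $f$ as a quasiregular $\vol_N$-curve (with $\norm{\vol_N}\equiv 1$ and $\star f^\ast \vol_N = J_f$) and applying Theorem \ref{mainResult} together with the signed representation of $\vol_N$ from Lemma \ref{representationLemma}. The sign mechanism you sketch---each product $\alpha_i\wedge\beta_i=\psi_i\,\vol_N$ with $\psi_i$ of constant sign, combined with $J_f\geq 0$---is precisely the paper's construction, where $\psi_i=(\norm{\xi_c}^2\circ\Phi_i)\,J_{\Phi_i}\geq 0$ for a harmonic representative $\xi_c$ and orientation-preserving diffeomorphisms $\Phi_i$.
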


\subsection*{Method of the proof of Theorem \ref{mainResult}; weak reverse Hölder inequality}

Our main method of proof for Theorem \ref{mainResult} is a weak reverse Hölder inequality for signed quasiregular curves. The method of proof grew out of the observation that the growth result of Bonk and Heinonen follows almost immediately from the higher integrability result of Prywes \cite[Proposition 2.5]{PR}. The connection between the weak reverse Hölder inequality and fast growth is formally stated in the following theorem.

\begin{thm}\label{secondMainResult}
Let $f\colon \R^n \to N$ be a nonconstant $K$-quasiregular $\omega$-curve, where $N$ is a connected and oriented Riemannian $m$-manifold, $m\geq n\geq 2$, $K\geq 1$, and $\omega \in \Omega^n(N)$ is an $n$-volume form. If there exists constants $p>1$ and $C_p>0$ such that $(\star f^\ast \omega)$ satisfies the weak reverse Hölder inequality
\[
\left( \frac{1}{\abs{B^n\left( \frac{r}{2}\right)}} \int_{B^n\left( \frac{r}{2}\right)} (\star f^\ast \omega)^p \right)^\frac{1}{p} \leq C_p\frac{1}{\abs{B^n(r)}} \int_{B^n(r)} f^\ast \omega
\]
for every $r>0$, then $f$ has fast growth of order $\varepsilon=\varepsilon(n,p)>0$.
\end{thm}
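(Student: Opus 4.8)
The plan is to reduce the statement to a purely real-variable growth estimate for the nonnegative weight $u := (\star f^\ast \omega)$ and to extract the polynomial lower bound directly from the weak reverse Hölder inequality by a reverse-doubling argument. First I would record that $u \geq 0$ almost everywhere: since $\omega$ is an $n$-volume form we have $\norm{\omega} > 0$ pointwise, so the distortion inequality $(\norm{\omega}\circ f)\norm{Df}^n \leq K(\star f^\ast\omega)$ forces $(\star f^\ast\omega) \geq 0$. Writing $I(r) = \int_{B^n(r)} f^\ast \omega = \int_{B^n(r)} u\,\vol \geq 0$, the function $I$ is nondecreasing, and fast growth of order $\varepsilon$ is precisely the assertion $\liminf_{r\to\infty} r^{-\varepsilon} I(r) > 0$.

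The heart of the argument is a single estimate obtained by chaining Hölder's inequality on a small ball with the hypothesis. Fix $0 < \rho \leq r/2$. By Hölder's inequality and the inclusion $B^n(\rho)\subset B^n(r/2)$ (using $u^p \geq 0$),
\[
I(\rho) = \int_{B^n(\rho)} u \leq \abs{B^n(\rho)}^{1-\frac1p}\Big(\int_{B^n(\rho)} u^p\Big)^{\frac1p} \leq \abs{B^n(\rho)}^{1-\frac1p}\Big(\int_{B^n(r/2)} u^p\Big)^{\frac1p}.
\]
Applying the weak reverse Hölder inequality to the last factor and substituting the ball volumes $\abs{B^n(t)} = \kappa_n t^n$, the powers of $\kappa_n$ cancel and the powers of the radii combine to give, with $\varepsilon := n\big(1-\tfrac1p\big) = \tfrac{n(p-1)}{p}$,
\[
I(\rho) \leq C_p\, 2^{-n/p}\Big(\frac{\rho}{r}\Big)^{\varepsilon} I(r) \qquad (0 < \rho \leq r/2).
\]
The decisive feature is that the exponent $\varepsilon$ depends only on $n$ and $p$; the constant $C_p$ has been absorbed into the multiplicative prefactor and does not affect the order of growth.

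Rearranging the displayed inequality gives the lower bound $I(r) \geq \frac{2^{n/p}}{C_p}\big(\frac{r}{\rho}\big)^\varepsilon I(\rho)$ for all $r \geq 2\rho$. Because $f$ is nonconstant, the distortion inequality shows $u > 0$ on a set of positive measure, so $I(\rho_0) > 0$ for some fixed $\rho_0 > 0$. Fixing this $\rho_0$ and letting $r\to\infty$ yields
\[
\liminf_{r\to\infty}\frac{1}{r^\varepsilon}\int_{B^n(r)} f^\ast \omega \geq \frac{2^{n/p}}{C_p}\,\frac{I(\rho_0)}{\rho_0^{\varepsilon}} > 0,
\]
which is exactly fast growth of order $\varepsilon = \varepsilon(n,p) > 0$.

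I expect the only genuine subtlety to be the one already highlighted: arranging the estimate so that the growth exponent is independent of the reverse Hölder constant $C_p$. A naive iteration of the hypothesis (bounding $I(r/2)$ by $I(r)$ and iterating over dyadic scales) produces an exponent of the form $n - \log_2 C_p$, which is useless once $C_p \geq 2^n$. The reverse-doubling formulation above circumvents this by applying Hölder on the inner ball $B^n(\rho)$ and exploiting the inclusion $B^n(\rho)\subset B^n(r/2)$ \emph{before} invoking the hypothesis, so that the $L^p$-gain is converted into the radius ratio $(\rho/r)^\varepsilon$ rather than being spent against $C_p$. The remaining points—nonnegativity of $u$, local $p$-integrability (implicit in the hypothesis, since its right-hand side is finite), and positivity of $I(\rho_0)$—are routine.
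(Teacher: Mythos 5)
Your proof is correct and follows essentially the same route as the paper: a single application of the weak reverse H\"older inequality, in which the mismatch between the normalizations $\abs{B^n(r/2)}^{-1/p}$ and $\abs{B^n(r)}^{-1}$ produces exactly the factor $r^{n(1-1/p)}$, anchored at a fixed radius where positivity holds because $f$ is nonconstant. The only difference is cosmetic: you insert an extra H\"older inequality on the inner ball so as to restate the estimate as a reverse-doubling inequality $I(r)\geq C_p^{-1}2^{n/p}(r/\rho)^\varepsilon I(\rho)$, whereas the paper keeps the quantity $\bigl(\int_{B^n(r_0/2)}(\star f^\ast\omega)^p\bigr)^{1/p}$ as the positive anchor; both arguments use one application of the hypothesis (no iteration) and yield the same exponent $\varepsilon=n(1-\tfrac1p)$.
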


Having Theorem \ref{secondMainResult} at our disposal, it suffices to prove the following version of Prywes' higher integrability result for signed quasiregular curves to obtain Theorem \ref{mainResult}.

\begin{thm}\label{improvedWeakReverseHölderProposition}
Let $f\colon \R^n \to N$ be a nonconstant $K$-quasiregular $\omega$-curve, where $N$ is a connected and oriented Riemannian $m$-manifold, $m\geq n\geq 2$, $K\geq 1$, and $\omega \in \cA_b^n(N)$ is an $n$-volume form satisfying $\inf_N \norm{\omega} >0$. Suppose that either $\omega$ has an $f$-signed representation or an $\R$-linear representation. Then there exists constants $p=p(n,K,\omega)>1$ and $C=C(n,K,\omega)>0$ for which the inequality
\[
\left( \frac{1}{\abs{\frac{1}{2}B}} \int_{\frac{1}{2}B} (\star f^\ast \omega)^p \right)^\frac{1}{p} \leq C\frac{1}{\abs{B}} \int_{B} f^\ast \omega
\]
holds for every open ball $B=B^n(x,r)\subset \R^n$, where $\frac{1}{2}B=B^n\left(x,\frac{r}{2}\right)$.
\end{thm}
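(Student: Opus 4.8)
The plan is to reduce the assertion, via Gehring's lemma, to a \emph{weak reverse Hölder inequality with sub-unit exponent on the larger ball}: I will show that there is an exponent $\theta=\theta(n)\in(0,1)$ and a constant $C=C(n,K,\omega)$ so that
\[
\frac{1}{\abs{\tfrac{1}{2}B}}\int_{\tfrac{1}{2}B}(\star f^\ast\omega)\leq C\left(\frac{1}{\abs{B}}\int_B(\star f^\ast\omega)^\theta\right)^{1/\theta}
\]
for every ball $B=B^n(x,r)$. Since $\theta<1$, the right-hand side is dominated by $\frac{1}{\abs{B}}\int_B(\star f^\ast\omega)$ by Jensen's inequality, and the self-improving property of such sub-unit inequalities (the Gehring lemma in the form used by Prywes) upgrades this to the stated estimate for some $p=p(n,K,\omega)>1$. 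Throughout I write $g=(\star f^\ast\omega)$ and note that the distortion inequality together with $\inf_N\norm{\omega}>0$ gives $\norm{Df}^n\leq (K/\inf_N\norm{\omega})\,g$ and, since $\omega$ is bounded, $g\leq\norm{\omega}_\infty\norm{Df}^n$; in particular $g\geq0$ and $g\in L^1_{\loc}$.

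The engine of the sub-unit estimate is that the factors of $\omega$ pull back to weakly closed forms. Writing the chosen representation as $\omega=\sum_{i=1}^j\varphi_i\,\alpha_i\wedge\beta_i$ with $1\leq\ell_i\leq n-1$, the pullbacks $f^\ast\alpha_i$ and $f^\ast\beta_i$ are closed in the distributional sense and lie in $L^{n/\ell_i}_{\loc}$ and $L^{n/(n-\ell_i)}_{\loc}$, with the pointwise comass bounds $\abs{f^\ast\alpha_i}\lesssim\norm{Df}^{\ell_i}$ and $\abs{f^\ast\beta_i}\lesssim\norm{Df}^{n-\ell_i}$. On the ball $B$ I apply the Iwaniec--Lutoborski homotopy operator to the closed form $f^\ast\alpha_i$ to obtain a primitive $a_i$ with $da_i=f^\ast\alpha_i$ and the scale-invariant Sobolev--Poincaré bound $\norm{a_i}_{L^{t^\ast}(B)}\lesssim\norm{f^\ast\alpha_i}_{L^t(B)}$, where $t^\ast=nt/(n-t)$. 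The point of the representation is that each product is then exact: using that $f^\ast\beta_i$ is weakly closed, $f^\ast(\alpha_i\wedge\beta_i)=da_i\wedge f^\ast\beta_i=d(a_i\wedge f^\ast\beta_i)$.

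Now I integrate against a cutoff $\eta$ with $\eta\equiv1$ on $\tfrac12 B$, $0\leq\eta\leq1$, $\mathrm{supp}\,\eta\subset B$ and $\abs{d\eta}\lesssim 1/r$, transferring the derivative onto $\eta$ by Stokes' theorem, $\int_B\eta\,d(a_i\wedge f^\ast\beta_i)=-\int_B d\eta\wedge a_i\wedge f^\ast\beta_i$, so that $\abs{\int_B\eta\,f^\ast(\alpha_i\wedge\beta_i)}\lesssim\tfrac1r\int_B\abs{a_i}\,\norm{Df}^{n-\ell_i}$. Applying Hölder's inequality with the conjugate pair $(t^\ast,(t^\ast)')$ and the Sobolev--Poincaré bound for $a_i$, the requirement that $t^\ast$ and $(t^\ast)'$ be conjugate (together with $t^\ast=nt/(n-t)$ and $\ell_i t=n\theta=(n-\ell_i)(t^\ast)'$) forces $\theta=n/(n+1)$; then $\tfrac1t+\tfrac1{(t^\ast)'}=\tfrac1\theta$ and
\[
\tfrac1r\int_B\abs{a_i}\,\norm{Df}^{n-\ell_i}\lesssim\tfrac1r\left(\int_B\norm{Df}^{n\theta}\right)^{1/\theta}\lesssim\tfrac1r\left(\int_B g^\theta\right)^{1/\theta}.
\]
Since $n/\theta=n+1$, the powers of $r$ cancel after normalizing by $\abs{\tfrac12 B}$ and $\abs{B}^{1/\theta}$, giving exactly the scale-invariant sub-unit inequality. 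In the $\R$-linear case $f^\ast\omega=d\big(\sum_i c_i\,a_i\wedge f^\ast\beta_i\big)$ is globally exact, and as $g\geq0$ the bound $\int_{\frac12 B}g\leq\int_B\eta g$ feeds directly into the above.

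The main obstacle is the $f$-signed case, where the coefficients $\varphi_i\circ f$ are not constant: a naive integration by parts produces the term $f^\ast(d\varphi_i)\wedge a_i\wedge f^\ast\beta_i$, which carries a spurious extra factor $\norm{Df}$ and destroys the scaling. This is precisely where the signing hypothesis enters. Since each $g_i=(\star f^\ast(\alpha_i\wedge\beta_i))$ has a fixed sign $\epsilon_i$ on all of $\R^n$, I bound $g\leq\sum_i\norm{\varphi_i}_\infty\abs{g_i}$ pointwise and then estimate $\int_{\frac12 B}\abs{g_i}=\epsilon_i\int_{\frac12 B}g_i\leq\epsilon_i\int_B\eta\,g_i=\epsilon_i\int_B\eta\,d(a_i\wedge f^\ast\beta_i)$, thereby removing the coefficients altogether and reducing once more to the exact-form estimate of the previous paragraph. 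It remains only to justify the weak closedness and the existence of the primitives with the stated Sobolev bounds for the merely $L^{n/\ell_i}$-integrable pullbacks, and to record that Gehring's lemma applies with uniformity over all balls; these are standard once the sub-unit inequality is established.
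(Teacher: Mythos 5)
Your proposal is correct and follows essentially the same route as the paper: both reduce the theorem via Gehring's lemma to a weak reverse H\"older inequality with the sub-unit exponent $\tfrac{n}{n+1}$, proved by taking an Iwaniec--Lutoborski primitive of $f^\ast\alpha_i$, integrating by parts against a cutoff, applying H\"older with exactly the conjugate exponents you derive, and invoking the distortion inequality, with the $f$-signed (or $\R$-linear) hypothesis used precisely as you use it --- to replace $\varphi_i\circ f$ by $\norm{\varphi_i}_\infty$ and reduce to the exact-form estimate, thereby avoiding the fatal $f^\ast(d\varphi_i)$ term. Your pointwise bound $(\star f^\ast\omega)\leq\sum_i\norm{\varphi_i}_\infty\abs{\star f^\ast(\alpha_i\wedge\beta_i)}$ is only a cosmetic variant of the paper's term-by-term integral estimate, so the two arguments coincide in substance.
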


As a byproduct of the method, we obtain a higher integrability result similar to \cite[Theorem 1.3]{OP2}.

\begin{thm}\label{higherIntegrability}
Let $f\colon \R^n \to N$ be a nonconstant $K$-quasiregular $\omega$-curve, where $N$ is a connected and oriented Riemannian $m$-manifold, $m\geq n\geq 2$, $K\geq 1$, and $\omega \in \cA_b^n(N)$ is an $n$-volume form satisfying $\inf_N \norm{\omega} >0$. Suppose that either $\omega$ has an $f$-signed representation or an $\R$-linear representation. Then $f\in W_{\loc}^{1,q}(\R^n,N)$ for some $q=q(n,K,\omega)>n$.
\end{thm}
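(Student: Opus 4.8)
The plan is to read off the higher integrability of $Df$ directly from the weak reverse Hölder inequality established in Theorem \ref{improvedWeakReverseHölderProposition}, transferring integrability from the Jacobian-type function $(\star f^\ast\omega)$ to the operator norm $\norm{Df}$ by means of the distortion inequality and the standing assumption $\inf_N\norm{\omega}>0$. Since the hypotheses of Theorem \ref{higherIntegrability} coincide with those of Theorem \ref{improvedWeakReverseHölderProposition}, I may apply the latter to obtain constants $p=p(n,K,\omega)>1$ and $C=C(n,K,\omega)>0$ such that
\[
\left( \frac{1}{\abs{\frac{1}{2}B}} \int_{\frac{1}{2}B} (\star f^\ast \omega)^p \right)^\frac{1}{p} \leq C\frac{1}{\abs{B}} \int_{B} f^\ast \omega
\]
for every ball $B=B^n(x,r)\subset\R^n$.

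The first step is to note that the right-hand side is finite for every ball. Indeed, from the definition of the comass norm we have $\abs{(\star f^\ast\omega)}\leq(\norm{\omega}\circ f)\norm{Df}^n$ almost everywhere, and since $\omega\in\cA_b^n(N)$ has bounded comass and $f\in W^{1,n}_{\loc}(\R^n,N)$, it follows that $(\star f^\ast\omega)\in L^1_{\loc}(\R^n)$. Consequently the left-hand side is finite for every ball, which is exactly the assertion that $(\star f^\ast\omega)\in L^p_{\loc}(\R^n)$.

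The second step is to transfer this integrability to the differential. The distortion inequality
\[
(\norm{\omega}\circ f)\norm{Df}^n \leq K(\star f^\ast\omega)
\]
together with $\inf_N\norm{\omega}>0$ yields $\norm{Df}^n\leq \frac{K}{\inf_N\norm{\omega}}(\star f^\ast\omega)$ almost everywhere in $\R^n$. Hence $\norm{Df}^n\in L^p_{\loc}(\R^n)$, that is, $\norm{Df}\in L^{np}_{\loc}(\R^n)$. Setting $q=np>n$, which depends only on $n$, $K$, and $\omega$, and recalling that $f$ is continuous and therefore locally bounded, I conclude that $f\in W^{1,q}_{\loc}(\R^n,N)$.

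All of the analytic work is carried by Theorem \ref{improvedWeakReverseHölderProposition}, so I do not expect a genuine obstacle in this deduction; the only points demanding care are the bookkeeping of the comparison constant $K/\inf_N\norm{\omega}$ and the observation that the improved exponent $p>1$ already places $(\star f^\ast\omega)$ strictly above the borderline integrability implicit in $f\in W^{1,n}_{\loc}$, so that the self-improving Gehring-type argument used to prove Theorem \ref{improvedWeakReverseHölderProposition} need not be invoked a second time here.
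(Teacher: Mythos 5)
Your proposal is correct and follows essentially the same route as the paper: apply Theorem \ref{improvedWeakReverseHölderProposition} to conclude $(\star f^\ast\omega)\in L^p_{\loc}(\R^n)$ for some $p>1$, then use the distortion inequality together with $\inf_N\norm{\omega}>0$ to get $\norm{Df}^{np}\leq \bigl(K/\inf_N\norm{\omega}\bigr)^p(\star f^\ast\omega)^p\in L^1_{\loc}$, so that $q=np>n$ works. The only difference is that you spell out the (correct) local integrability argument justifying $(\star f^\ast\omega)\in L^p_{\loc}$, which the paper leaves implicit.
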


\subsection*{Organization of the article}

In Section \ref{signedRep}, we construct a representation of the Riemannian volume form that is signed with respect to all quasiregular mappings. In Section \ref{growthProof}, we prove Theorem \ref{secondMainResult}. In Section \ref{higherIntegrabilitySection}, we prove Theorems \ref{improvedWeakReverseHölderProposition}, \ref{higherIntegrability}, and \ref{mainResult}. In Section \ref{distributionSection}, we discuss the equidistribution of quasiregular curves and prove Theorem \ref{valueResult}. In Section \ref{ex}, we present a family of examples of signed quasiregular curves.

\subsection*{Acknowledgements}

The author thanks her thesis advisor Pekka Pankka for all his help improving the manuscript.

\section{Signed representation of the Riemannian volume form}\label{signedRep}

In this section, we describe how to construct a representation of the Riemannian volume form that is signed with respect to all quasiregular mappings. The construction is based on the work of Prywes \cite{PR}.

Let $N$ be a closed, connected, and oriented Riemannian $n$-manifold, $n\geq 2$, which is not a rational cohomology sphere. Let $\alpha \in \Omega^\ell(N)$ and $\beta \in \Omega^{n-\ell}(N)$ be closed forms satisfying
\[
\int_N \alpha \wedge \beta = \int_N \vol_N
\]
for some $1\leq \ell \leq n-1$. Following the proof of \cite[Lemma 2.3]{PR}, there exists a positive function $h \in C^\infty(N)$, a smooth partition of unity $\{\lambda_i\}_{i=1}^j$ on $N$, and orientation preserving diffeomorphisms $\Phi_i \colon N\to N$, $i=1,\ldots,j$, for which
\[
\vol_N = h \sum_{i=1}^j \lambda_i \Phi_i^\ast(\alpha \wedge \beta) = \sum_{i=1}^j h\lambda_i (\Phi_i^\ast \alpha \wedge \Phi_i^\ast \beta).
\]
In our terminology, this is a representation of $\vol_N$ in the algebra $\cA_b^n(N)$, since $h\lambda_i \in C_b^\infty(N)$, $\Phi_i^\ast \alpha \in \cZ_b^\ell(N)$, and $\Phi_i^\ast \beta \in \cZ_b^{n-\ell}(N)$ for $i=1,\ldots,j$.

To obtain a representation that is signed with respect to all quasiregular mappings, we apply this construction to a Hodge pair.

\begin{lem}\label{representationLemma}
Let $N$ be a closed, connected, and oriented Riemannian $n$-manifold, $n\geq 2$, which is not a rational cohomology sphere. Then $\vol_N$ has a representation which is signed with respect to all quasiregular mappings into $N$.
\end{lem}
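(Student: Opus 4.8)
The plan is to apply the construction described at the start of this section to a \emph{Hodge pair} $\alpha,\beta=\star\alpha$, chosen so that the wedge $\alpha\wedge\beta$ is a pointwise nonnegative multiple of $\vol_N$; the sign condition will then be automatic because quasiregular mappings are sense-preserving. First I would use the hypothesis on $N$ to produce $\alpha$: since $N$ is closed and not a rational cohomology sphere, there is an index $1\leq\ell\leq n-1$ with $H^\ell_{dR}(N)\neq 0$, and Hodge theory on the closed oriented Riemannian manifold $N$ yields a nonzero harmonic $\ell$-form $\alpha$, which is in particular closed. Set $\beta=\star\alpha$, where $\star$ denotes the Hodge star operator of $N$. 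Since $\star$ commutes with the Laplace--de Rham operator, $\beta$ is harmonic as well, hence closed, and the standard pointwise identity for the Hodge star gives
\[
\alpha\wedge\beta=\alpha\wedge\star\alpha=\abs{\alpha}^2\vol_N,
\]
where $\abs{\alpha}\colon N\to[0,\infty)$ is the pointwise norm of $\alpha$. After replacing $\alpha$ by a suitable positive scalar multiple, we may assume the normalization $\int_N\alpha\wedge\beta=\int_N\vol_N$ required by the construction; this rescaling preserves the pointwise inequality $\abs{\alpha}^2\geq 0$.

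Applying the construction to the pair $\alpha,\beta$ then produces a positive function $h\in C^\infty(N)$, a smooth partition of unity $\{\lambda_i\}_{i=1}^j$, and orientation preserving diffeomorphisms $\Phi_i\colon N\to N$ with
\[
\vol_N=\sum_{i=1}^j h\lambda_i\,(\Phi_i^\ast\alpha\wedge\Phi_i^\ast\beta).
\]
Since $N$ is compact, $h\lambda_i\in C_b^\infty(N)$, $\Phi_i^\ast\alpha\in\cZ_b^\ell(N)$, and $\Phi_i^\ast\beta\in\cZ_b^{n-\ell}(N)$, so this is a representation of $\vol_N$ in $\cA_b^n(N)$.

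It remains to check that this representation is $f$-signed for every quasiregular mapping $f\colon\R^n\to N$, that is, that each function $(\star f^\ast(\Phi_i^\ast\alpha\wedge\Phi_i^\ast\beta))$ does not change sign. As pullback commutes with the wedge product, $\Phi_i^\ast\alpha\wedge\Phi_i^\ast\beta=\Phi_i^\ast(\alpha\wedge\beta)=\Phi_i^\ast(\abs{\alpha}^2\vol_N)$. Because $\Phi_i$ is orientation preserving, its Jacobian $J_{\Phi_i}$ is positive, so
\[
\Phi_i^\ast(\abs{\alpha}^2\vol_N)=\big((\abs{\alpha}^2\circ\Phi_i)\,J_{\Phi_i}\big)\vol_N=:\psi_i\vol_N,
\]
with $\psi_i\geq 0$ on $N$. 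Pulling back by $f$ and using $f^\ast\vol_N=J_f\vol_{\R^n}$ gives
\[
\star f^\ast(\Phi_i^\ast\alpha\wedge\Phi_i^\ast\beta)=(\psi_i\circ f)\,J_f\quad\text{almost everywhere in }\R^n.
\]
Since every quasiregular mapping is sense-preserving, $J_f\geq 0$ almost everywhere, and hence $(\psi_i\circ f)J_f\geq 0$ almost everywhere. Thus each term has constant (nonnegative) sign, so the representation is $f$-signed; as $f$ was arbitrary, it is signed with respect to all quasiregular mappings into $N$.

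The step that genuinely requires care is recognizing that $\beta=\star\alpha$ is \emph{closed}, which is exactly where the harmonicity (coclosedness) of $\alpha$ enters; the existence of such a nonzero harmonic form in an intermediate degree is, in turn, precisely what the hypothesis that $N$ is not a rational cohomology sphere guarantees. Everything else is a direct computation, whose crux is the identity $\alpha\wedge\star\alpha=\abs{\alpha}^2\vol_N$ combined with the nonnegativity of the Jacobians $J_{\Phi_i}$ and $J_f$.
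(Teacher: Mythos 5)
Your proposal is correct and follows essentially the same route as the paper: take the harmonic representative $\alpha$ of a nonzero class in $H^\ell(N)$, pair it with $\star\alpha$ so that $\alpha\wedge\star\alpha=\abs{\alpha}^2\vol_N$, feed this Hodge pair into the Prywes-based construction, and conclude signedness from the nonnegativity of $\abs{\alpha}^2$, the positivity of the $J_{\Phi_i}$, and the sense-preserving property $J_f\geq 0$ of quasiregular mappings. The only cosmetic difference is that you verify the sign condition for mappings $\R^n\to N$, while the paper runs the identical computation for quasiregular mappings $M\to N$ from an arbitrary oriented Riemannian $n$-manifold, as the lemma's statement requires.
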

\begin{proof}
Since $N$ is not a rational cohomology sphere, there exists $1\leq \ell \leq n-1$ for which $H^\ell(N)\neq 0$. Let $0\neq c \in H^\ell(N)$ and let $\xi_c \in c$ be the harmonic representative of $c$, that is, $\xi_c$ is the unique form in the de Rham cohomology class $c$ such that $d\xi_c=0$ and $d^\ast \xi_c=0$. We may assume that
\[
\int_N \xi_c \wedge \star \xi_c = \int_N \vol_N.
\]
Then the volume form $\vol_N$ has a representation
\begin{equation}\label{riemexp}
\vol_N = h \sum_{i=1}^j \lambda_i \Phi_i^\ast(\xi_c \wedge \star \xi_c),
\end{equation}
where $h \in C^\infty(N)$ is a positive function, $\{\lambda_i\}_{i=1}^j$ is a smooth partition of unity on $N$, and $\Phi_i \colon N\to N$ is an orientation preserving diffeomorphism for every $i=1,\ldots,j$.

Let $f\colon M \to N$ be a $K$-quasiregular mapping, where $M$ is an oriented Riemannian $n$-manifold and $K\geq 1$. Then
\begin{align*}
f^\ast (\Phi_i^\ast(\xi_c \wedge \star \xi_c)) &= f^\ast (\Phi_i^\ast( \ip{\xi_c}{\xi_c} \vol_N )) = (\ip{\xi_c}{\xi_c} \circ \Phi_i \circ f)(J_{\Phi_i} \circ f)J_f \vol_M \\
&= (\norm{\xi_c}^2 \circ \Phi_i \circ f)(J_{\Phi_i} \circ f)J_f \vol_M
\end{align*}
for every $i=1,\ldots,j$. The function $\norm{\xi_c}^2$ is nonnegative, each Jacobian determinant $J_{\Phi_i}$ is positive, and the Jacobian $J_f$ is nonnegative. Hence, the representation \eqref{riemexp} is $f$-signed.
\end{proof}

\section{Weak reverse Hölder inequality implies fast growth}\label{growthProof}

In this section, we prove Theorem \ref{secondMainResult}.

\begin{proof}[Proof of Theorem \ref{secondMainResult}]
Since
\[
\left( \frac{1}{\abs{B^n\left( \frac{r}{2}\right)}} \int_{B^n\left( \frac{r}{2}\right)} (\star f^\ast \omega)^p \right)^\frac{1}{p} \leq C_p\frac{1}{\abs{B^n(r)}} \int_{B^n(r)} f^\ast \omega
\]
for every $r>0$, we have that
\[
C_p^{-1}\abs{B^n(1)}^{1-\frac{1}{p}}2^{\frac{n}{p}} \left( \int_{B^n\left(\frac{r}{2}\right)} (\star f^\ast \omega)^p \right)^\frac{1}{p} \leq \frac{\int_{B^n(r)} f^\ast \omega}{r^\varepsilon}
\]
for every $r>0$ for $\varepsilon=n\left( 1-\frac{1}{p}\right)>0$. Since $f$ is a nonconstant $K$-quasiregular $\omega$-curve, we may choose $r_0 >0$ for which
\[
\int_{B^n\left(\frac{r_0}{2}\right)} (\star f^\ast \omega)^p >0.
\]
Then, for
\[
C=C_p^{-1}\abs{B^n(1)}^{1-\frac{1}{p}}2^{\frac{n}{p}} >0,
\]
we have the estimate
\[
\frac{\int_{B^n(r)} f^\ast \omega}{r^\varepsilon} \geq C\left( \int_{B^n\left(\frac{r}{2}\right)} (\star f^\ast \omega)^p \right)^\frac{1}{p} \geq C\left( \int_{B^n\left(\frac{r_0}{2}\right)} (\star f^\ast \omega)^p \right)^\frac{1}{p}
\]
for every $r\geq r_0$. Thus
\[
\liminf_{r\to \infty} \frac{\int_{B^n(r)} f^\ast \omega}{r^\varepsilon} \geq C\left( \int_{B^n\left(\frac{r_0}{2}\right)} (\star f^\ast \omega)^p \right)^\frac{1}{p}.
\]
This proves the theorem.
\end{proof}

\section{Higher integrability of signed quasiregular curves}\label{higherIntegrabilitySection}

Let $f\colon \R^n \to N$ be a quasiregular $\omega$-curve. Then $u=(\star f^\ast \omega)$ is a locally integrable function and we may apply known results for locally integrable functions such as Gehring's lemma; see e.g. \cite[Theorem 4.2]{BI}. Hence, we may prove Theorem \ref{improvedWeakReverseHölderProposition} by proving the following result that is similar to results for quasiregular mappings due to Prywes (see \cite[Proposition 2.2]{PR} and \cite[Lemma 2.4]{PR}). The proof uses techniques developed there.

\begin{prop}\label{weakReverseHölderProposition}
Let $f\colon \R^n \to N$ be a nonconstant $K$-quasiregular $\omega$-curve, where $N$ is a connected and oriented Riemannian $m$-manifold, $m\geq n\geq 2$, $K\geq 1$, and $\omega \in \cA_b^n(N)$ is an $n$-volume form satisfying $\inf_N \norm{\omega} >0$. Suppose that either $\omega$ has an $f$-signed representation or an $\R$-linear representation. Then
\[
\frac{1}{\abs{\frac{1}{2}B}}\int_{\frac{1}{2}B} f^\ast \omega \leq C(n,K,\omega) \left( \frac{1}{\abs{B}}\int_{B} (\star f^\ast \omega)^\frac{n}{n+1} \right)^\frac{n+1}{n}
\]
for every open ball $B\subset \R^n$.
\end{prop}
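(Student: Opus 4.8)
The plan is to exploit the product structure $\omega=\sum_{i=1}^j\varphi_i\,\alpha_i\wedge\beta_i$ of the representation together with the fact that pullbacks of closed forms under $W^{1,n}_{\loc}$-maps remain weakly closed. Write $u=(\star f^\ast\omega)$ and $v_i=(\star f^\ast(\alpha_i\wedge\beta_i))$, and fix a ball $B=B^n(x,r)$ with a cutoff $\psi\in C_c^\infty(B)$ satisfying $\psi\equiv1$ on $\tfrac12 B$, $0\leq\psi\leq1$, and $\abs{\nabla\psi}\leq C/r$. The distortion inequality together with $\inf_N\norm{\omega}>0$ gives $u\geq0$ almost everywhere and, using the comass bounds, the pointwise estimates
\[
\norm{Df}^n\leq C\,(\star f^\ast\omega),\qquad \abs{f^\ast\alpha_i}\leq C\norm{Df}^{\ell_i},\qquad \abs{f^\ast\beta_i}\leq C\norm{Df}^{n-\ell_i},
\]
with $C=C(n,K,\omega)$, while $d(f^\ast\alpha_i)=f^\ast(d\alpha_i)=0$ and $d(f^\ast\beta_i)=0$ hold weakly because $\alpha_i,\beta_i$ are closed.

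\emph{First reduction.} Since $u\geq0$ and $\psi\equiv1$ on $\tfrac12 B$, one has $\int_{\frac12 B}f^\ast\omega=\int_{\frac12 B}u\leq\int_B\psi u$. In the $\R$-linear case the constants factor out and $\int_B\psi u=\sum_i\varphi_i\int_B\psi\,v_i$, so $\int_{\frac12 B}f^\ast\omega\leq\sum_i\abs{\varphi_i}\bigl|\int_B\psi\,v_i\bigr|$. In the $f$-signed case I would instead bound $\int_B\psi u\leq\sum_i\norm{\varphi_i}_\infty\int_B\psi\abs{v_i}$ and then use that each $v_i$ has a fixed sign to rewrite $\int_B\psi\abs{v_i}=\bigl|\int_B\psi\,v_i\bigr|$. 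This is exactly where the signedness hypothesis enters: it converts the integral of $\abs{v_i}$, which resists integration by parts, into the absolute value of the integral of the closed-form product $f^\ast(\alpha_i\wedge\beta_i)$, and simultaneously lets me keep $\varphi_i\circ f$ \emph{outside} the integration by parts, thereby avoiding the uncontrolled term $d(\varphi_i\circ f)$ that a naive argument would generate. Both cases thus reduce to estimating $\bigl|\int_B\psi\,f^\ast(\alpha_i\wedge\beta_i)\bigr|$ for each $i$.

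\emph{Integration by parts.} On the ball $B$ the weakly closed form $f^\ast\beta_i$ is exact; using an Iwaniec--Lutoborski type homotopy operator I would choose a primitive $b_i$ with $db_i=f^\ast\beta_i$ and the Sobolev--Poincaré estimate
\[
\left(\frac{1}{\abs{B}}\int_B\abs{b_i}^{s}\right)^{1/s}\leq C\,r\left(\frac{1}{\abs{B}}\int_B\abs{f^\ast\beta_i}^{t}\right)^{1/t},\qquad \frac1s=\frac1t-\frac1n.
\]
Since $f^\ast\alpha_i$ is weakly closed and $\psi\,f^\ast\alpha_i\wedge b_i$ has compact support, Stokes' theorem (justified by approximation) yields $\int_B\psi\,f^\ast\alpha_i\wedge f^\ast\beta_i=\pm\int_B d\psi\wedge f^\ast\alpha_i\wedge b_i$, whence $\bigl|\int_B\psi\,v_i\bigr|\leq\frac{C}{r}\int_B\abs{f^\ast\alpha_i}\,\abs{b_i}$.

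\emph{Choice of exponents and conclusion.} Applying Hölder's inequality with the conjugate pair $p=\frac{n^2}{\ell_i(n+1)}$, $s=p'$, together with the Poincaré estimate at the exponent $t=\frac{n^2}{(n-\ell_i)(n+1)}$ (which is consistent with $\frac1{p'}=\frac1t-\frac1n$), collapses both powers of $\norm{Df}$ onto the single exponent $\ell_i p=(n-\ell_i)t=\frac{n^2}{n+1}$; since $\frac1p+\frac1t=\frac{n+1}{n}$ and the factors of $r$ and $\abs{B}$ balance, this gives
\[
\bigl|\textstyle\int_B\psi\,v_i\bigr|\leq C\abs{B}\left(\frac{1}{\abs{B}}\int_B\norm{Df}^{\frac{n^2}{n+1}}\right)^{\frac{n+1}{n}}.
\]
Using $\norm{Df}^{\frac{n^2}{n+1}}=(\norm{Df}^n)^{\frac{n}{n+1}}\leq C\,u^{\frac{n}{n+1}}$, summing over the finitely many $i$, and dividing by $\abs{\tfrac12 B}=2^{-n}\abs{B}$ yields the asserted inequality. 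The main obstacle I expect is twofold: securing the Sobolev--Poincaré bound for the primitive $b_i$ of the merely weakly closed Sobolev form $f^\ast\beta_i$ with the correct scaling in $r$, and deploying the sign (or $\R$-linearity) hypothesis at precisely the right moment so that the integration by parts closes without cancellation; the exponent bookkeeping, though delicate, is forced by the requirement that the target power be $\tfrac{n}{n+1}$ and is therefore essentially automatic once the scheme above is set up.
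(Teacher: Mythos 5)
Your proposal is correct and follows essentially the same route as the paper: reduce via the signedness (or $\R$-linearity) to estimating $\bigl|\int_B \psi\, f^\ast(\alpha_i\wedge\beta_i)\bigr|$, apply the Iwaniec--Lutoborski Sobolev--Poincar\'e inequality to a primitive of one closed pullback factor, integrate by parts against $d\psi$, and close with H\"older and the distortion inequality at the exponents $\frac{n^2}{\ell_i(n+1)}$ and $\frac{n^2}{(n-\ell_i)(n+1)}$, which are exactly the paper's. The only (immaterial, by symmetry) difference is that you take the primitive of $f^\ast\beta_i$ while the paper normalizes $\ell_i\leq n/2$ and takes the primitive of $f^\ast\alpha_i$.
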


\begin{proof}
Let 
\[
\cF(\omega) = \inf \sum_{i=1}^j \norm{\varphi_i}_\infty \norm{\alpha_i}_\infty \norm{\beta_i}_\infty,
\]
where the infimum is taken over all $f$-signed and $\R$-linear representations $\omega = \sum_{i=1}^j \varphi_i \alpha_i \wedge \beta_i$ of $\omega$ in $\cA_b^n(N)$. Then $0<\cF(\omega)<\infty$. Thus, there exists either an $f$-signed or an $\R$-linear representation $\omega = \sum_{i=1}^j \varphi_i \alpha_i \wedge \beta_i$ for which
\[
\sum_{i=1}^j \norm{\varphi_i}_\infty \norm{\alpha_i}_\infty \norm{\beta_i}_\infty < 2\cF(\omega).
\]

For every $i=1,\ldots,j$, the forms $f^\ast \alpha_i$ and $f^\ast \beta_i$ are weakly closed since the forms $\alpha_i$ and $\beta_i$ are closed. By writing $\alpha_i \wedge \beta_i = ((-1)^{\ell_i(n-\ell_i)} \beta_i) \wedge \alpha_i$ if necessary, we may assume that for every $i=1,\ldots,j$, $\alpha_i \in \cZ_b^{\ell_i}(N)$ for $1\leq \ell_i \leq \frac{n}{2}$.

Let $B\subset \R^n$ be an open ball. Then, by the Poincaré inequality for differential forms, there exists for every $i=1,\ldots,j$ a differential form $\tau_i \in W^{1,p_i}(\Lambda^{\ell_i-1}B)$ satisfying $d\tau_i=f^\ast \alpha_i$ in the weak sense and
\begin{equation}\label{spineq}
\norm{\tau_i}_{\frac{nq_i}{n-q_i},B} \leq C(n)\norm{f^\ast \alpha_i}_{q_i,B},
\end{equation}
where $p_i=\frac{n}{\ell_i}$ and $q_i=\frac{n}{\ell_i}\frac{n}{n+1}$; see Iwaniec and Lutoborski \cite[Corollary 4.2]{IL}.

Let $\psi \in C_c^\infty(\R^n)$ be a nonnegative function such that $\psi(x)=1$ for every $x\in \frac{1}{2}B$, $\psi(x)=0$ for every $x\in \R^n \setminus B$, and $\abs{d\psi} \leq \frac{3}{r}$. By the nonnegativity of $\psi(\star f^\ast \omega)$, we have that
\[
\int_{\frac{1}{2}B} f^\ast \omega \leq \int_B \psi f^\ast \omega = \sum_{i=1}^j \int_B \psi (\varphi_i \circ f) f^\ast (\alpha_i \wedge \beta_i).
\]

Suppose first that the representation $\omega = \sum_{i=1}^j \varphi_i \alpha_i \wedge \beta_i$ is $\R$-linear. Then the functions $\varphi_i$ are constant functions $\varphi_i \equiv c_i$ for every $i=1,\ldots,j$ and
\begin{align*}
\sum_{i=1}^j \int_B \psi (\varphi_i \circ f) f^\ast (\alpha_i \wedge \beta_i) &= \sum_{i=1}^j c_i \int_B \psi f^\ast (\alpha_i \wedge \beta_i) \\
&\leq \sum_{i=1}^j \norm{\varphi_i}_\infty \abs{\int_B \psi f^\ast (\alpha_i \wedge \beta_i)}.
\end{align*}

Suppose now that the representation $\omega = \sum_{i=1}^j \varphi_i \alpha_i \wedge \beta_i$ is $f$-signed. For every $i=1,\ldots,j$, the function $(\star f^\ast (\alpha_i \wedge \beta_i))$ is either nonnegative or nonpositive almost everywhere. If the function $(\star f^\ast (\alpha_i \wedge \beta_i))$ is nonnegative, then
\begin{align*}
\int_B \psi (\varphi_i \circ f) f^\ast (\alpha_i \wedge \beta_i) &\leq \norm{\varphi_i}_\infty \int_B \psi f^\ast (\alpha_i \wedge \beta_i) \\
&= \norm{\varphi_i}_\infty \abs{\int_B \psi f^\ast (\alpha_i \wedge \beta_i)}.
\end{align*}
On the other hand, if the function $(\star f^\ast (\alpha_i \wedge \beta_i))$ is nonpositive, then
\begin{align*}
\int_B \psi (\varphi_i \circ f) f^\ast (\alpha_i \wedge \beta_i) &= \int_B \psi (-\varphi_i \circ f) (-f^\ast (\alpha_i \wedge \beta_i)) \\
&\leq \norm{\varphi_i}_\infty \int_B \psi (-f^\ast (\alpha_i \wedge \beta_i)) \\
&= \norm{\varphi_i}_\infty \abs{\int_B \psi f^\ast (\alpha_i \wedge \beta_i)}.
\end{align*}
Thus
\[
\sum_{i=1}^j \int_B \psi (\varphi_i \circ f) f^\ast (\alpha_i \wedge \beta_i) \leq \sum_{i=1}^j \norm{\varphi_i}_\infty \abs{\int_B \psi f^\ast (\alpha_i \wedge \beta_i)}.
\]

Hence, it suffices to estimate the term
\[
\abs{\int_B \psi f^\ast (\alpha_i \wedge \beta_i)} = \abs{\int_B d\tau_i \wedge (\psi f^\ast \beta_i)}
\]
for every $i=1,\ldots,j$. We obtain by integration by parts that
\[
\abs{\int_B d\tau_i \wedge (\psi f^\ast \beta_i)} = \abs{\int_B \tau_i \wedge d\psi \wedge f^\ast \beta_i} \leq C(n)\int_B \abs{\tau_i} \abs{d\psi} \abs{f^\ast \beta_i}
\]
for every $i=1,\ldots,j$. Further, by the inequality $\abs{d\psi} \leq \frac{3}{r}$, Hölder's inequality and inequality \eqref{spineq}, we have the estimate
\begin{align*}
C(n)\int_B \abs{\tau_i} \abs{d\psi} \abs{f^\ast \beta_i} &\leq \frac{C(n)}{r} \norm{\tau_i}_{\frac{nq_i}{n-q_i},B} \norm{f^\ast \beta_i}_{\frac{n}{n-\ell_i}\frac{n}{n+1},B} \\
&\leq \frac{C(n)}{r} \norm{f^\ast \alpha_i}_{q_i,B} \norm{f^\ast \beta_i}_{\frac{n}{n-\ell_i}\frac{n}{n+1},B}
\end{align*}
for every $i=1,\ldots,j$.

Since $f$ is a $K$-quasiregular $\omega$-curve, we may estimate each term $\norm{f^\ast \alpha_i}_{q_i,B}$ by
\begin{align*}
\norm{f^\ast \alpha_i}_{q_i,B} &= \left( \int_B \abs{f^\ast \alpha_i}^{\frac{n}{\ell_i}\frac{n}{n+1}} \right)^{\frac{\ell_i}{n}\frac{n+1}{n}} \leq C(n)\norm{\alpha_i}_\infty \left( \int_B \norm{Df}^{n\frac{n}{n+1}} \right)^{\frac{\ell_i}{n}\frac{n+1}{n}}  \\
&\leq C(n)\norm{\alpha_i}_\infty \left( \inf_N \norm{\omega} \right)^{-\frac{\ell_i}{n}} \left( \int_B ((\norm{\omega} \circ f) \norm{Df}^n)^{\frac{n}{n+1}} \right)^{\frac{\ell_i}{n}\frac{n+1}{n}} \\
&\leq C(n)\norm{\alpha_i}_\infty \left( \inf_N \norm{\omega} \right)^{-\frac{\ell_i}{n}} K^\frac{\ell_i}{n} \left( \int_B (\star f^\ast \omega)^\frac{n}{n+1} \right)^{\frac{\ell_i}{n}\frac{n+1}{n}}.
\end{align*}
Similarly, we get the estimate
\begin{align*}
&\norm{f^\ast \beta_i}_{\frac{n}{n-\ell_i}\frac{n}{n+1},B} \\
&\quad \leq C(n)\norm{\beta_i}_\infty \left( \inf_N \norm{\omega} \right)^\frac{\ell_i-n}{n} K^\frac{n-\ell_i}{n} \left( \int_B (\star f^\ast \omega)^\frac{n}{n+1} \right)^{\frac{n-\ell_i}{n}\frac{n+1}{n}}
\end{align*}
for every $i=1,\ldots,j$.

Combining all the estimates, we arrive at
\begin{align*}
&\frac{1}{\abs{\frac{1}{2}B}} \int_{\frac{1}{2}B} f^\ast \omega \\
&\quad \leq \left( \sum_{i=1}^j \norm{\varphi_i}_\infty \norm{\alpha_i}_\infty \norm{\beta_i}_\infty \right) \left( \inf_N \norm{\omega} \right)^{-1} \frac{C(n)}{r^{n+1}} K \left( \int_B (\star f^\ast \omega)^\frac{n}{n+1} \right)^\frac{n+1}{n} \\
&\quad < 2\cF(\omega) C(\omega) C(n) K \left( \frac{1}{r^n} \int_B (\star f^\ast \omega)^\frac{n}{n+1} \right)^\frac{n+1}{n} \\
&\quad = C(n,K,\omega) \left( \frac{1}{\abs{B}} \int_B (\star f^\ast \omega)^\frac{n}{n+1} \right)^\frac{n+1}{n}.
\end{align*}
This concludes the proof.
\end{proof}

As mentioned, Theorem \ref{improvedWeakReverseHölderProposition} now follows from Gehring's lemma and Proposition \ref{weakReverseHölderProposition}. Consequently, Theorem \ref{higherIntegrability} is almost immediate.

\begin{proof}[Proof of Theorem \ref{higherIntegrability}]
By Theorem \ref{improvedWeakReverseHölderProposition}, $(\star f^\ast \omega) \in L_{\loc}^p(\R^n)$ for some $p=p(n,K,\omega)>1$. Let $q=np>n$. Then, for every bounded domain $U\subset \R^n$, we have the estimate
\begin{align*}
\int_U \norm{Df}^q &\leq \left( \inf_N \norm{\omega} \right)^{-p} \int_U ((\norm{\omega} \circ f)\norm{Df}^n)^p \\
&\leq \left( \inf_N \norm{\omega} \right)^{-p} K^p \int_U (\star f^\ast \omega)^p <\infty.
\end{align*}
Hence, $\norm{Df} \in L_{\loc}^q(\R^n)$ and $f\in W_{\loc}^{1,q}(\R^n,N)$.
\end{proof}

Next, we combine Theorems \ref{improvedWeakReverseHölderProposition} and \ref{secondMainResult} to prove Theorem \ref{mainResult}.

\begin{proof}[Proof of Theorem \ref{mainResult}]
We have by Theorem \ref{improvedWeakReverseHölderProposition} that there exists constants $p=p(n,K,\omega)>1$ and $C=C(n,K,\omega)>0$ for which the inequality
\[
\left( \frac{1}{\abs{\frac{1}{2}B}} \int_{\frac{1}{2}B} (\star f^\ast \omega)^p \right)^\frac{1}{p} \leq C\frac{1}{\abs{B}} \int_{B} f^\ast \omega
\]
holds for every open ball $B\subset \R^n$. Then, by considering open balls $B^n(r)\subset \R^n$ for $r>0$, the claim follows from Theorem \ref{secondMainResult}.
\end{proof}

\section{Equidistribution of quasiregular curves}\label{distributionSection}

We begin by proving an equidistribution theorem for quasiregular curves in the spirit of Mattila and Rickman \cite{MR}. The following statement is analogous to result of Pankka \cite[Theorem 4]{PA3} and while the proof is essentially the same, we present it for the reader's convenience.

\begin{thm}\label{equidistribution}
Let $f\colon \R^n \to N$ be a nonconstant $K$-quasiregular $\omega_0$-curve, where $N$ is a connected and oriented Riemannian $m$-manifold, $m\geq n\geq 2$, $K\geq 1$, and $\omega_0 \in \Omega^n(N)$ is an $n$-volume form satisfying $\inf_N \norm{\omega_0} >0$. Suppose that the function $A_{\omega_0,f} \colon (0,\infty) \to [0,\infty)$, $r\mapsto \int_{B^n(r)} f^\ast \omega_0$, is unbounded. Then, for every $\omega \in \Omega^n(N)$ satisfying $\omega_0-\omega=d\tau$ for some $\tau \in \Omega_b^{n-1}(N)$, there exists a set $E\subset [1,\infty)$ for which $\int_E \frac{\mathrm{d}r}{r} < \infty$ and
\[
\lim_{\substack{r\to \infty \\ r\notin E}} \frac{\int_{B^n(r)} f^\ast \omega}{\int_{B^n(r)} f^\ast \omega_0} = 1.
\]
\end{thm}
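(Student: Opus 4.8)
The plan is to follow the Mattila--Rickman/Pankka scheme alluded to in the excerpt: reduce the statement to a single boundary term via Stokes' theorem, control that term with the distortion inequality and Hölder's inequality, and then dispose of the exceptional radii using a Borel-type growth lemma. First I would exploit that $\omega_0-\omega=d\tau$ with $\tau\in\Omega_b^{n-1}(N)$. Pulling back gives $f^\ast\omega=f^\ast\omega_0-d(f^\ast\tau)$ in the weak sense, since $f\in W^{1,n}_{\loc}$ and $f^\ast(d\tau)=d(f^\ast\tau)$. Applying Stokes' theorem on $B^n(r)$, which is valid for almost every radius because $f^\ast\tau$ admits an integrable trace on $\partial B^n(r)$ for a.e. $r$ by Fubini, yields
\[
\int_{B^n(r)} f^\ast\omega = \int_{B^n(r)} f^\ast\omega_0 - \int_{\partial B^n(r)} f^\ast\tau .
\]
Writing $A(r)=\int_{B^n(r)} f^\ast\omega_0$, the ratio in the statement equals $1-A(r)^{-1}\int_{\partial B^n(r)} f^\ast\tau$, so it suffices to show that $A(r)^{-1}\int_{\partial B^n(r)} f^\ast\tau\to 0$ outside a set of finite logarithmic measure. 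Note that $u:=(\star f^\ast\omega_0)\ge 0$ almost everywhere by the distortion inequality together with $\inf_N\norm{\omega_0}>0$, so $A$ is nondecreasing; the unboundedness hypothesis then forces $A(r)\to\infty$.

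Next I would estimate the boundary term. Since $\tau$ is bounded, one has the pointwise bound $\abs{f^\ast\tau}\le C(n)\norm{\tau}_\infty\norm{Df}^{n-1}$, and the distortion inequality gives $\norm{Df}^{n-1}\le C(n,K,\omega_0)\,u^{(n-1)/n}$. Combining these with Hölder's inequality on $\partial B^n(r)$, whose $(n-1)$-dimensional measure is comparable to $r^{n-1}$, yields
\[
\abs{\int_{\partial B^n(r)} f^\ast\tau}\le C\Bigl(\int_{\partial B^n(r)} u\Bigr)^{\frac{n-1}{n}} r^{\frac{n-1}{n}} = C\,(rA'(r))^{\frac{n-1}{n}},
\]
where I use the polar-coordinate identity $A'(r)=\int_{\partial B^n(r)} u$, valid for a.e. $r$ since $A$ is absolutely continuous. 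Dividing by $A(r)$ gives $A(r)^{-1}\abs{\int_{\partial B^n(r)} f^\ast\tau}\le C\bigl(rA'(r)\,A(r)^{-\frac{n}{n-1}}\bigr)^{\frac{n-1}{n}}$.

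It then remains to show $rA'(r)\,A(r)^{-n/(n-1)}\to 0$ off a set of finite logarithmic measure. Fixing $\delta$ with $0<\delta<\frac{1}{n-1}$ and invoking the Borel growth lemma in its logarithmic-measure form, applied to the nondecreasing function $A$ after the substitution $t=\log r$, I obtain a set $E\subset[1,\infty)$ with $\int_E\frac{\mathrm{d}r}{r}<\infty$ on whose complement $rA'(r)\le A(r)^{1+\delta}$. On this complement,
\[
\frac{rA'(r)}{A(r)^{\frac{n}{n-1}}}\le A(r)^{1+\delta-\frac{n}{n-1}} = A(r)^{-\left(\frac{1}{n-1}-\delta\right)}\to 0
\]
as $r\to\infty$, because $A(r)\to\infty$ and $\frac{1}{n-1}-\delta>0$. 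Absorbing into $E$ the measure-zero set of radii where the trace or Stokes' theorem fails, which does not affect the logarithmic measure, completes the argument.

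The estimate chain of the second step is routine, so I expect the crux to lie in the exceptional-set analysis of the third step: selecting the correct form of the growth lemma and verifying that the exponent gap $\frac{1}{n-1}-\delta>0$ leaves enough room to upgrade an $O(1)$ bound to genuine decay $o(1)$. A secondary technical point, standard in this setting but requiring care, is the justification of Stokes' theorem for the merely Sobolev-regular form $f^\ast\tau$ on almost every sphere, which I would handle precisely by including the null set of bad radii in $E$.
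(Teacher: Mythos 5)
Your proposal is correct and follows essentially the same route as the paper's proof: Stokes' theorem reduces the claim to controlling the boundary term $\int_{\partial B^n(r)} f^\ast \tau$, the same pointwise bound/distortion/H\"older chain gives $\abs{\int_{\partial B^n(r)} f^\ast \tau} \leq C\,(rA'(r))^{\frac{n-1}{n}}$, and finiteness of the logarithmic measure of the exceptional set comes from integrating $A'/A^{1+\delta}$ for the monotone function $A$. The only difference is bookkeeping: the paper defines $E$ directly as the set of radii where the boundary term exceeds $A(r)^\delta$ with $\frac{n-1}{n}<\delta<1$ and integrates the resulting differential inequality, whereas you define $E$ via the Borel-type lemma as the set where $rA'(r)>A(r)^{1+\delta}$ with $\delta<\frac{1}{n-1}$; these are equivalent parametrizations of the same argument.
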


\begin{rem}
Theorem \ref{equidistribution} is cohomological contrary to the result in \cite{MR} which is for measures. This important distinction is illustrated further in Section \ref{ex}. There, we give an example of a volume form $\omega$ for which there exists both a quasiregular $\omega$-curve with nowhere dense image and a quasiregular $\omega$-curve with dense image.
\end{rem}

\begin{proof}[Proof of Theorem \ref{equidistribution}]
Let $\omega \in \Omega^n(N)$ and $\tau \in \Omega_b^{n-1}(N)$ be forms such that $\omega_0-\omega=d\tau$. Let $\frac{n-1}{n}<\delta<1$ and let $E\subset [1,\infty)$ be the set of all radii $r\geq 1$ satisfying
\[
\left( \int_{B^n(r)} f^\ast \omega_0 \right)^\delta \leq \abs{\int_{\bs^{n-1}(r)} f^\ast \tau}.
\]
Then Hölder's inequality yields the estimate
\begin{align*}
&\left( \int_{B^n(r)} f^\ast \omega_0 \right)^\delta \\
&\quad \leq \abs{\int_{\bs^{n-1}(r)} f^\ast \tau} \leq \int_{\bs^{n-1}(r)} \abs{f^\ast \tau} \leq C(n) \norm{\tau}_\infty \int_{\bs^{n-1}(r)} \norm{Df}^{n-1} \\
&\quad \leq C(n) \norm{\tau}_\infty \left( \int_{\bs^{n-1}(r)} \norm{Df}^n \right)^\frac{n-1}{n} \left( \int_{\bs^{n-1}(r)} 1 \right)^\frac{1}{n} \\
&\quad \leq C(n) \norm{\tau}_\infty r^\frac{n-1}{n} \left( \inf_N \norm{\omega_0} \right)^\frac{1-n}{n} \left( \int_{\bs^{n-1}(r)} (\norm{\omega_0} \circ f)\norm{Df}^n \right)^\frac{n-1}{n} \\
&\quad \leq C(n) \norm{\tau}_\infty r^\frac{n-1}{n} \left( \inf_N \norm{\omega_0} \right)^\frac{1-n}{n} K^\frac{n-1}{n} \left( \int_{\bs^{n-1}(r)} f^\ast \omega_0 \right)^\frac{n-1}{n}
\end{align*}
for almost every $r \in E$.

Since $(\star f^\ast \omega_0) \in L_{\loc}^1(\R^n)$, the function $A_{\omega_0,f}$ is differentiable almost everywhere and
\[
(A_{\omega_0,f})'(r) = \int_{\bs^{n-1}(r)} f^\ast \omega_0
\]
for almost every $r>0$; see e.g. \cite[Theorem 3.12]{EG} for details. Thus, by the previous estimate, the function $A_{\omega_0,f}$ satisfies the differential inequality
\[
(A_{\omega_0,f}(r))^{\delta\frac{n}{n-1}} \leq C r (A_{\omega_0,f})'(r)
\]
for almost every $r \in E$, where $C=C(n,\tau,\omega_0,K)$. Since $A_{\omega_0,f}$ is unbounded, we may choose $r_0 >0$ for which $A_{\omega_0,f}(r_0)>0$. Then, for every $R>r_0$, the function $\frac{1}{1-\delta\frac{n}{n-1}}(A_{\omega_0,f})^{1-\delta\frac{n}{n-1}}$ is well-defined and nondecreasing on the interval $[r_0,R]$. Hence
\begin{align*}
\int_E \frac{\mathrm{d}r}{r} &\leq \log(r_0) + \lim_{R\to \infty} C\int_{r_0}^R \frac{(A_{\omega_0,f})'(r)}{(A_{\omega_0,f}(r))^{\delta\frac{n}{n-1}}} \, \mathrm{d}r \\
&\leq \log(r_0) + \lim_{R\to \infty} \frac{C}{1-\delta\frac{n}{n-1}} \left((A_{\omega_0,f}(R))^{1-\delta\frac{n}{n-1}} - (A_{\omega_0,f}(r_0))^{1-\delta\frac{n}{n-1}} \right) \\
&= \log(r_0) + \frac{C}{\delta\frac{n}{n-1}-1} (A_{\omega_0,f}(r_0))^{1-\delta\frac{n}{n-1}} < \infty;
\end{align*}
see \cite[Theorem 18.14]{HS} for details for the second step.

Finally, since $\delta <1$, we have that
\begin{align*}
\abs{\frac{\int_{B^n(r)} f^\ast \omega}{\int_{B^n(r)} f^\ast \omega_0} -1} &= \abs{\frac{\int_{B^n(r)} f^\ast (\omega_0 - d\tau)}{\int_{B^n(r)} f^\ast \omega_0} -1} = \abs{\frac{\int_{B^n(r)} f^\ast d\tau}{\int_{B^n(r)} f^\ast \omega_0}} \\
&= \frac{\abs{\int_{\bs^{n-1}(r)} f^\ast \tau}}{\int_{B^n(r)} f^\ast \omega_0} \leq \frac{\left( \int_{B^n(r)} f^\ast \omega_0 \right)^\delta}{\int_{B^n(r)} f^\ast \omega_0} = (A_{\omega_0,f}(r))^{\delta-1} \to 0
\end{align*}
as $r\to \infty$, $r\notin E$. This concludes the proof.
\end{proof}

Having Theorem \ref{equidistribution} at our disposal, we have that, if a quasiregular $\omega_0$-curve has fast growth, then the growth condition is also satisfied in a large set of radii for any form $\omega$ satisfying $\omega_0-\omega=d\tau$ for a bounded $\tau$.

\begin{prop}\label{boundedCohomologyDependence}
Let $f\colon \R^n \to N$ be a nonconstant $K$-quasiregular $\omega_0$-curve, where $N$ is a connected and oriented Riemannian $m$-manifold, $m\geq n\geq 2$, $K\geq 1$, and $\omega_0 \in \Omega^n(N)$ is an $n$-volume form satisfying $\inf_N \norm{\omega_0} >0$. Suppose that $f$ has fast growth of order $\varepsilon >0$. Then, for every $\omega \in \Omega^n(N)$ satisfying $\omega_0-\omega=d\tau$ for some $\tau \in \Omega_b^{n-1}(N)$, there exists a set $E\subset [1,\infty)$ for which $\int_E \frac{\mathrm{d}r}{r} < \infty$ and
\[
\liminf_{\substack{r\to \infty \\ r\notin E}} \frac{1}{r^\varepsilon}\int_{B^n(r)} f^\ast \omega >0.
\]
In particular, if $(\star f^\ast \omega) \geq 0$ almost everywhere in $\R^n$, then
\[
\liminf_{r\to \infty} \frac{1}{r^\varepsilon}\int_{B^n(r)} f^\ast \omega >0.
\]
\end{prop}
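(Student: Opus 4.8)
The plan is to reduce the entire statement to the equidistribution result, Theorem \ref{equidistribution}. First I would note that the fast growth hypothesis
\[
\liminf_{r\to \infty} \frac{1}{r^\varepsilon} A_{\omega_0,f}(r) > 0, \qquad A_{\omega_0,f}(r) = \int_{B^n(r)} f^\ast \omega_0,
\]
combined with $r^\varepsilon \to \infty$, forces $A_{\omega_0,f}$ to be unbounded. Hence Theorem \ref{equidistribution} applies and furnishes a set $E\subset [1,\infty)$ with $\int_E \frac{\mathrm{d}r}{r} < \infty$ along which
\[
\frac{\int_{B^n(r)} f^\ast \omega}{\int_{B^n(r)} f^\ast \omega_0} \longrightarrow 1 \quad (r\to \infty,\ r\notin E).
\]
This $E$ is the exceptional set claimed in the proposition.

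For the first conclusion I would factor
\[
\frac{1}{r^\varepsilon}\int_{B^n(r)} f^\ast \omega = \frac{\int_{B^n(r)} f^\ast \omega}{\int_{B^n(r)} f^\ast \omega_0}\cdot \frac{1}{r^\varepsilon}\int_{B^n(r)} f^\ast \omega_0
\]
and take $\liminf$ over $r\notin E$. Since $f$ is a quasiregular $\omega_0$-curve, the distortion inequality gives $(\star f^\ast \omega_0)\geq 0$ almost everywhere, so $A_{\omega_0,f}\geq 0$ and the second factor is nonnegative; the first factor tends to $1$ along $r\notin E$. A convergent positive factor may be pulled out of a $\liminf$ of a nonnegative quantity, so the $\liminf$ of the product over $r\notin E$ is bounded below by $\liminf_{r\to\infty,\,r\notin E} r^{-\varepsilon} A_{\omega_0,f}(r)$, which in turn is at least the $\liminf$ over all $r>0$, namely the positive fast growth constant. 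This yields the first displayed inequality.

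For the ``in particular'' part, the extra hypothesis $(\star f^\ast \omega)\geq 0$ almost everywhere makes $r\mapsto \int_{B^n(r)} f^\ast \omega$ nondecreasing, and this monotonicity is exactly what lets me absorb the exceptional set. Since $\int_E \frac{\mathrm{d}r}{r} < \infty$ whereas $\int_{r/2}^r \frac{\mathrm{d}s}{s} = \log 2$, for all large $r$ the interval $[r/2,r]$ cannot be contained in $E$, so I may choose $r'\in [r/2,r]\setminus E$. Monotonicity together with $r'\geq r/2$ then gives
\[
\frac{1}{r^\varepsilon}\int_{B^n(r)} f^\ast \omega \geq \frac{1}{r^\varepsilon}\int_{B^n(r')} f^\ast \omega \geq 2^{-\varepsilon}\frac{1}{(r')^\varepsilon}\int_{B^n(r')} f^\ast \omega,
\]
and as $r\to\infty$ the chosen $r'\to\infty$ through the complement of $E$. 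Taking $\liminf$ and invoking the first conclusion produces $\liminf_{r\to\infty} r^{-\varepsilon}\int_{B^n(r)} f^\ast \omega \geq 2^{-\varepsilon}\liminf_{r\to\infty,\,r\notin E} r^{-\varepsilon}\int_{B^n(r)} f^\ast \omega > 0$.

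I expect the only genuine subtlety to be this last removal of the exceptional set: it requires simultaneously the quantitative fact that $E$ has finite logarithmic measure (hence cannot swallow a fixed logarithmic-length interval near infinity) and the sign hypothesis $(\star f^\ast \omega)\geq 0$, which supplies the monotonicity needed to compare the value at an arbitrary radius $r$ with the value at a nearby good radius $r'\notin E$. The remaining steps are a direct application of Theorem \ref{equidistribution} and elementary manipulations of $\liminf$.
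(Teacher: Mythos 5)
Your proposal is correct, and its overall architecture matches the paper's proof: fast growth makes $A_{\omega_0,f}$ unbounded, Theorem \ref{equidistribution} then supplies the exceptional set $E$ of finite logarithmic measure, and the first conclusion follows by factoring $r^{-\varepsilon}\int_{B^n(r)} f^\ast \omega$ through the ratio $\int_{B^n(r)} f^\ast \omega \big/ \int_{B^n(r)} f^\ast \omega_0$, exactly as in the paper.

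Where you genuinely diverge is the ``in particular'' statement, and your route is a real simplification. The paper argues by contradiction: assuming $\liminf_{r\to\infty} r^{-\varepsilon}\int_{B^n(r)} f^\ast \omega = 0$, it extracts radii $s_k \geq 2^k$ along which this quantity is small, splits into two cases according to whether the $s_k$ eventually avoid $E$ or lie in $E$ infinitely often, and in the latter case uses the finite-logarithmic-measure property to produce good radii $t_j \in [s_{k_j}/2, s_{k_j}) \setminus E$, transferring the smallness downward by monotonicity to contradict the first conclusion. You instead argue directly: for every large $r$ choose $r' \in [r/2,r]\setminus E$, and monotonicity of $r \mapsto \int_{B^n(r)} f^\ast \omega$ (from $(\star f^\ast \omega)\geq 0$) gives $r^{-\varepsilon}\int_{B^n(r)} f^\ast \omega \geq 2^{-\varepsilon}(r')^{-\varepsilon}\int_{B^n(r')} f^\ast \omega$, after which one takes $\liminf$. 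The two arguments use identical ingredients --- monotonicity plus the fact that $E$ cannot contain a full interval $[r/2,r]$ for large $r$, since such an interval has logarithmic length $\log 2$ --- but yours dispenses with the contradiction and the two-case analysis, making the proof shorter and cleaner; the paper's version makes the role of the dyadic scales $2^k$ more explicit but at the cost of length. The one subtlety in your version, which you do address, is that the comparison radius $r'(r)$ tends to infinity through the complement of $E$, so the $\liminf$ over all radii is genuinely controlled by the $\liminf$ over $[1,\infty)\setminus E$ established in the first part.
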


\begin{rem}
In the proof of Proposition \ref{boundedCohomologyDependence}, we use the following property of sets of finite logarithmic measure: \emph{Given $E\subset [1,\infty)$ for which $\int_E \frac{\mathrm{d}r}{r} < \infty$ there exists $i_0 \in \N$ having the property that for every $i\geq i_0$ and every $t\in E\cap [2^i,2^{i+1}]$, $\left[ \frac{t}{2},t \right] \not\subset E$.} This property follows immediately from the observation that $\int_\frac{t}{2}^t \frac{\mathrm{d}r}{r} = \ln 2$.
\end{rem}

\begin{proof}[Proof of Proposition \ref{boundedCohomologyDependence}]
Let $\omega \in \Omega^n(N)$ and $\tau \in \Omega_b^{n-1}(N)$ be differential forms satisfying $\omega_0-\omega=d\tau$.  The function $A_{\omega_0,f} \colon (0,\infty) \to [0,\infty)$, $r\mapsto \int_{B^n(r)} f^\ast \omega_0$, is unbounded since $f$ has fast growth of order $\varepsilon$. Hence, by Theorem \ref{equidistribution}, there exists a set $E\subset [1,\infty)$ satisfying $\int_E \frac{\mathrm{d}r}{r} < \infty$ and
\[
\lim_{\substack{r\to \infty \\ r\notin E}} \frac{\int_{B^n(r)} f^\ast \omega}{\int_{B^n(r)} f^\ast \omega_0} = 1.
\]
Then
\begin{align*}
\liminf_{\substack{r\to \infty \\ r\notin E}} \frac{1}{r^\varepsilon}\int_{B^n(r)} f^\ast \omega &= \liminf_{\substack{r\to \infty \\ r\notin E}} \frac{\int_{B^n(r)} f^\ast \omega_0}{r^\varepsilon} \frac{\int_{B^n(r)} f^\ast \omega}{\int_{B^n(r)} f^\ast \omega_0} \\
&\geq \left( \liminf_{\substack{r\to \infty \\ r\notin E}} \frac{\int_{B^n(r)} f^\ast \omega_0}{r^\varepsilon} \right) \left( \liminf_{\substack{r\to \infty \\ r\notin E}} \frac{\int_{B^n(r)} f^\ast \omega}{\int_{B^n(r)} f^\ast \omega_0} \right) \\
&\geq \left( \liminf_{r\to \infty} \frac{1}{r^\varepsilon}\int_{B^n(r)} f^\ast \omega_0 \right) >0.
\end{align*}

Now suppose that $(\star f^\ast \omega) \geq 0$ almost everywhere in $\R^n$. Suppose towards contradiction that
\[
\liminf_{r\to \infty} \frac{\int_{B^n(r)} f^\ast \omega}{r^\varepsilon} =0.
\]
For every $k\in \N$, we may choose $r_k \geq 2^k$ for which
\[
\inf_{s\geq r_k} \frac{\int_{B^n(s)} f^\ast \omega}{s^\varepsilon} <\frac{1}{k}.
\]
Hence
\[
\frac{\int_{B^n(s_k)} f^\ast \omega}{s_k^\varepsilon} <\frac{1}{k}
\]
for some $s_k \geq r_k$. In particular,
\[
\lim_{k\to \infty} \frac{\int_{B^n(s_k)} f^\ast \omega}{s_k^\varepsilon} =0.
\]

We have two cases. Suppose first that there exists $k_0 \in \N$ so that $s_k \notin E$ for every $k\geq k_0$. Then
\[
0= \lim_{k\to \infty} \frac{\int_{B^n(s_k)} f^\ast \omega}{s_k^\varepsilon} \geq \liminf_{\substack{r\to \infty \\ r\notin E}} \frac{\int_{B^n(r)} f^\ast \omega}{r^\varepsilon} >0,
\]
which is a contradiction. Suppose now that for every $j\in \N$ there exists $k_j \geq j$ so that $s_{k_j} \in E$. Since $s_{k_j} \geq r_{k_j} \geq 2^{k_j}$ for every $j\in \N$, there exists $j_0 \in \N$ so that for every $j\geq j_0$ there exists $t_j \notin E$ satisfying $\frac{s_{k_j}}{2} \leq t_j < s_{k_j}$. Then
\[
\frac{\int_{B^n(t_j)} f^\ast \omega}{t_j^\varepsilon} \leq 2^\varepsilon \frac{\int_{B^n(s_{k_j})} f^\ast \omega}{s_{k_j}^\varepsilon} < \frac{2^\varepsilon}{k_j}
\]
for every $j\geq j_0$. Hence
\[
\lim_{j\to \infty} \frac{\int_{B^n(t_j)} f^\ast \omega}{t_j^\varepsilon} =0.
\]
Again, this leads to a contradiction, since
\[
0= \lim_{j\to \infty} \frac{\int_{B^n(t_j)} f^\ast \omega}{t_j^\varepsilon} \geq \liminf_{\substack{r\to \infty \\ r\notin E}} \frac{\int_{B^n(r)} f^\ast \omega}{r^\varepsilon} >0.
\]
\end{proof}

We are now ready to prove Theorem \ref{valueResult}.

\begin{proof}[Proof of Theorem \ref{valueResult}]
Let $\omega \in \Omega^n(N)$ and $\tau \in \Omega_b^{n-1}(N)$ be differential forms satisfying $\omega_0-\omega=d\tau$. By Theorem \ref{mainResult}, $f$ has fast growth of order $\varepsilon=\varepsilon(n,K,\omega_0)>0$. Then, by Proposition \ref{boundedCohomologyDependence}, there exists a set $E\subset [1,\infty)$ for which $\int_E \frac{\mathrm{d}r}{r} < \infty$ and
\[
\liminf_{\substack{r\to \infty \\ r\notin E}} \frac{1}{r^\varepsilon}\int_{B^n(r)} f^\ast \omega >0.
\]
In particular, if $(\star f^\ast \omega) \geq 0$ almost everywhere in $\R^n$, then
\[
\liminf_{r\to \infty} \frac{1}{r^\varepsilon}\int_{B^n(r)} f^\ast \omega >0.
\]
This completes the proof.
\end{proof}

\section{A family of examples}\label{ex}

In this section, we give an example of an $n$-volume form which admits both quasiregular curves with dense image and quasiregular curves with nowhere dense image. This complements the equidistribution result in Section \ref{distributionSection} (Theorem \ref{equidistribution}).

Let $n\geq 2$ and let $T^{n+1} = \bs^1 \times \dots \times \bs^1$ be the $(n+1)$-dimensional torus equipped with the product Riemannian metric. Let $\pr_j \colon T^{n+1} \to \bs^1$ be the $j$th projection $(e^{2\pi ix_1},\ldots,e^{2\pi ix_{n+1}}) \mapsto e^{2\pi ix_j}$ for every $j=1,\ldots,n$ and let $\pi \colon \R^{n+1} \to T^{n+1}$ be the standard locally isometric covering map $(x_1,\ldots,x_{n+1}) \mapsto (e^{2\pi ix_1},\ldots,e^{2\pi ix_{n+1}})$. Let $\vol_{\bs^1} \in \Omega^1(\bs^1)$ be the standard angular form and let
\[
\omega = \pr_1^\ast \vol_{\bs^1} \wedge \dots \wedge \pr_n^\ast \vol_{\bs^1} \in \Omega^n(T^{n+1}).
\]
Clearly, $\omega$ is an $n$-volume form on $T^{n+1}$ satisfying $\norm{\omega}=1$ and $\pi^\ast \omega = dx_1 \wedge \dots \wedge dx_n \in \Omega^n(\R^{n+1})$. Further, $\omega$ has an $\R$-linear representation in the algebra $\cA_b^n(T^{n+1})$ and every quasiregular $\omega$-curve is signed with respect to $\omega$.

Let $y=(y_1,\ldots,y_n)\in \R^n$ and let $L_y \colon \R^n \to \R^{n+1}$ be the mapping $x\mapsto (x,x\cdot y)$. Let also $f_y \colon \R^n \to T^{n+1}$ be the composed mapping $\pi \circ L_y \in C^\infty(\R^n,T^{n+1})$. Since
\[
\norm{Df_y}^n = \norm{D(\pi \circ L_y)}^n = \norm{((D\pi) \circ L_y)DL_y}^n = \norm{DL_y}^n \leq (1 + \abs{y})^n
\]
and
\[
f_y^\ast \omega = (\pi \circ L_y)^\ast \omega = L_y^\ast (dx_1 \wedge \dots \wedge dx_n) = \vol_{\R^n},
\]
the mapping $f_y$ is a $(1 + \abs{y})^n$-quasiregular $\omega$-curve.

Next we show that the image $f_y(\R^n)\subset T^{n+1}$ is nowhere dense if $y\in \Q^n$ and dense if $y\in (\R \setminus \Q)^n$. To that end, denote the Riemannian distance function on $\bs^1$ by $d_1$ and the Riemannian distance function on $T^{n+1}$ by $d_{n+1}$.

\subsection*{Rational case}

Suppose that $y\in \Q^n$. Recall that for a rational angle $\theta$ the set $\{\, e^{2\pi ik\theta} \colon k\in \Z \,\} \subset \bs^1$ is finite.

Suppose towards contradiction that the closure of $f_y(\R^n)$ has nonempty interior. Then there exists a point $v=(v_1,\ldots,v_{n+1})\in \Q^n \times (\R \setminus \Q)$ for which $\pi(v)=(e^{2\pi iv_1},\ldots,e^{2\pi iv_{n+1}})\in T^{n+1}$ belongs to the interior of the closure of $f_y(\R^n)$. For every $j=1,\ldots,n$, we have $v_j=\frac{p_j}{q_j}$ for some $p_j \in \Z$ and $q_j \in \Z_+$. Let
\[
E = \left\{ \, e^{2\pi ik_1 \frac{y_1}{q_1}} \cdot \dots \cdot e^{2\pi ik_n \frac{y_n}{q_n}} \colon k_1,\ldots,k_n \in \Z \, \right\} \subset \bs^1.
\]
The set $E$ is finite and $e^{2\pi iv_{n+1}} \notin E$. Let
\[
r = \frac{\min \{ \, d_1(z,e^{2\pi iv_{n+1}}) \colon z\in E \, \}}{4} >0.
\]
Now for each $t\in \R$ either $d_1(e^{2\pi it},e^{2\pi iv_{n+1}})>r$ or $d_1(e^{2\pi it},E)>r$. We show that this leads to a contradiction.

Let $0<\delta<r$ be such that $n\delta \left( \max_{1\leq j\leq n} \abs{y_j} \right)<r$. Since $\pi(v)$ belongs to the interior of the closure of $f_y(\R^n)$, there exists a point $x\in \R^n$ satisfying $d_{n+1}(f_y(x),\pi(v))<\delta$. Denote
\[
U = \left( v_1-\frac{1}{2},v_1+\frac{1}{2} \right) \times \dots \times \left( v_{n+1}-\frac{1}{2},v_{n+1}+\frac{1}{2} \right) \subset \R^{n+1}.
\]
Let $\ell=(\ell_1,\ldots,\ell_{n+1})\in \Z^{n+1}$ be so that $L_y(x)-\ell \in U$. Then
\begin{align*}
d_1(e^{2\pi ix\cdot y},e^{2\pi iv_{n+1}}) &= \abs{x\cdot y-\ell_{n+1}-v_{n+1}} \leq \abs{L_y(x)-\ell-v} \\
&= d_{n+1}(f_v(x),\pi(v))<\delta<r.
\end{align*}
Also
\begin{align*}
&d_1(e^{2\pi ix\cdot y},e^{2\pi i(q_1\ell_1+p_1)\frac{y_1}{q_1}} \cdot \dots \cdot e^{2\pi i(q_n\ell_n+p_n)\frac{y_n}{q_n}}) \\
&\quad \leq \abs{x\cdot y - \sum_{j=1}^n (q_j\ell_j+p_j)\frac{y_j}{q_j}} = \abs{\sum_{j=1}^n x_jy_j - \sum_{j=1}^n \ell_jy_j + \frac{p_j}{q_j}y_j} \\
&\quad = \abs{\sum_{j=1}^n (x_j-\ell_j-v_j)y_j} \leq \left( \max_{1\leq j\leq n} \abs{y_j} \right) \sum_{j=1}^n \abs{x_j-\ell_j-v_j} \\
&\quad \leq \left( \max_{1\leq j\leq n} \abs{y_j} \right) \sum_{j=1}^n \abs{L_y(x)-\ell-v} < \left( \max_{1\leq j\leq n} \abs{y_j} \right) n\delta <r.
\end{align*}
Since $e^{2\pi i(q_1\ell_1+p_1)\frac{y_1}{q_1}} \cdot \dots \cdot e^{2\pi i(q_n\ell_n+p_n)\frac{y_n}{q_n}}\in E$, we arrive at a contradiction.

\subsection*{Irrational case}

Suppose that $y\in (\R \setminus \Q)^n$. Recall that for an irrational angle $\theta$ the set $\{\, e^{2\pi ik\theta} \colon k\in \Z \,\}$ is dense in $\bs^1$.

Let $v=(v_1,\ldots,v_n)\in \R^{n+1}$. It suffices to show that for every $\delta >0$ there exists $x\in \R^n$ for which $d_{n+1}(f_y(x),\pi(v))<\delta$.

Let $\delta>0$. The set $\{\, e^{2\pi iky_n} \colon k\in \Z \,\}$ is dense in $\bs^1$, so we may choose $k_n\in \Z$ for which $d_1(e^{2\pi i(-v_ny_n+v_{n+1})},e^{2\pi ik_ny_n})<\delta/n$. Let $h_n \in \Z$ be such that $-v_ny_n+v_{n+1}-h_n \in (k_ny_n-1/2, k_ny_n+1/2)$. Similarly, for every $j=1,\ldots,n-1$, there exists $k_j,h_j \in \Z$ satisfying $d_1(e^{2\pi i(-v_jy_j)},e^{2\pi ik_jy_j})<\delta/n$ and $-v_jy_j-h_j \in (k_jy_j-1/2,k_jy_j+1/2)$. Let $x=(v_1+k_1,\ldots,v_n+k_n)\in \R^n$ and $k=(k_1,\ldots,k_n,-\sum_{j=1}^n h_j)\in \Z^{n+1}$. Then
\[
L_y(x)-k-v = \left( 0,\ldots,0,\sum_{j=1}^n (v_j+k_j)y_j+\sum_{j=1}^n h_j-v_{n+1} \right)
\]
and
\begin{align*}
d_{n+1}(f_y(x),\pi(v)) &= d_{n+1}(\pi(L_y(x)-k),\pi(v)) \leq \abs{L_y(x)-k-v} \\
&= \abs{\sum_{j=1}^n (v_j+k_j)y_j + \sum_{j=1}^n h_j - v_{n+1}} \\
&\leq \abs{v_ny_n-v_{n+1}+h_n+k_ny_n} + \sum_{j=1}^{n-1} \abs{v_jy_j+h_j+k_jy_j}.
\end{align*}
Since $\abs{v_ny_n-v_{n+1}+h_n+k_ny_n}=d_1(e^{2\pi i(-v_ny_n+v_{n+1})},e^{2\pi ik_ny_n})<\delta/n$ and $\abs{v_jy_j+h_j+k_jy_j}=d_1(e^{2\pi i(-v_jy_j)},e^{2\pi ik_jy_j})<\delta/n$ for $j=1,\ldots,n-1$, we have that $d_{n+1}(f_y(x),\pi(v))<\delta$.

\end{document}